\begin{document}

\newtheorem{theorem}[subsection]{Theorem}
\newtheorem{proposition}[subsection]{Proposition}
\newtheorem{lemma}[subsection]{Lemma}
\newtheorem{corollary}[subsection]{Corollary}
\newtheorem{conjecture}[subsection]{Conjecture}
\newtheorem{prop}[subsection]{Proposition}
\numberwithin{equation}{section}
\newcommand{\mr}{\ensuremath{\mathbb R}}
\newcommand{\mc}{\ensuremath{\mathbb C}}
\newcommand{\dif}{\mathrm{d}}
\newcommand{\intz}{\mathbb{Z}}
\newcommand{\ratq}{\mathbb{Q}}
\newcommand{\natn}{\mathbb{N}}
\newcommand{\comc}{\mathbb{C}}
\newcommand{\rear}{\mathbb{R}}
\newcommand{\prip}{\mathbb{P}}
\newcommand{\uph}{\mathbb{H}}
\newcommand{\fief}{\mathbb{F}}
\newcommand{\majorarc}{\mathfrak{M}}
\newcommand{\minorarc}{\mathfrak{m}}
\newcommand{\sings}{\mathfrak{S}}
\newcommand{\fA}{\ensuremath{\mathfrak A}}
\newcommand{\mn}{\ensuremath{\mathbb N}}
\newcommand{\mq}{\ensuremath{\mathbb Q}}
\newcommand{\half}{\tfrac{1}{2}}
\newcommand{\f}{f\times \chi}
\newcommand{\summ}{\mathop{{\sum}^{\star}}}
\newcommand{\chiq}{\chi \bmod q}
\newcommand{\chidb}{\chi \bmod db}
\newcommand{\chid}{\chi \bmod d}
\newcommand{\sym}{\text{sym}^2}
\newcommand{\hhalf}{\tfrac{1}{2}}
\newcommand{\sumstar}{\sideset{}{^*}\sum}
\newcommand{\sumprime}{\sideset{}{'}\sum}
\newcommand{\sumprimeprime}{\sideset{}{''}\sum}
\newcommand{\sumflat}{\sideset{}{^\flat}\sum}
\newcommand{\shortmod}{\ensuremath{\negthickspace \negthickspace \negthickspace \pmod}}
\newcommand{\V}{V\left(\frac{nm}{q^2}\right)}
\newcommand{\sumi}{\mathop{{\sum}^{\dagger}}}
\newcommand{\mz}{\ensuremath{\mathbb Z}}
\newcommand{\leg}[2]{\left(\frac{#1}{#2}\right)}
\newcommand{\muK}{\mu_{\omega}}
\newcommand{\thalf}{\tfrac12}
\newcommand{\lp}{\left(}
\newcommand{\rp}{\right)}
\newcommand{\Lam}{\Lambda_{[i]}}
\newcommand{\lam}{\lambda}
\def\L{\fracwithdelims}
\def\om{\omega}
\def\pbar{\overline{\psi}}
\def\phis{\varphi^*}
\def\lam{\lambda}
\def\lbar{\overline{\lambda}}
\newcommand\Sum{\Cal S}
\def\Lam{\Lambda}
\newcommand{\sumtt}{\underset{(d,2)=1}{{\sum}^*}}
\newcommand{\sumt}{\underset{(d,2)=1}{\sum \nolimits^{*}} \widetilde w\left( \frac dX \right) }

\newcommand{\hf}{\tfrac{1}{2}}
\newcommand{\af}{\mathfrak{a}}
\newcommand{\Wf}{\mathcal{W}}

\newcommand{\MT}{\operatorname{MT}}
\newcommand{\ET}{\operatorname{ET}}
\newcommand{\PP}{\operatorname{Prob}}
\newcommand{\sm}{\operatorname{small}}
\newcommand{\lr}{\operatorname{large}}
\newcommand{\LCM}{\operatorname{LCM}}
\newcommand{\tp}{\operatorname{top}}
\newcommand{\spn}{\operatorname{span}}
\newcommand{\Vol}{\operatorname{Vol}}
\newcommand{\new}{\operatorname{new}}
\newcommand{\old}{\operatorname{old}}
\newcommand{\GCD}{\operatorname{GCD}}

\newcommand{\bC}{\mathbb{C}}
\newcommand{\bH}{\mathbb{H}}
\newcommand{\bN}{\mathbb{N}}
\newcommand{\bQ}{\mathbb{Q}}
\newcommand{\bR}{\mathbb{R}}
\newcommand{\bS}{\mathbb{S}}
\newcommand{\bT}{\mathbb{T}}
\newcommand{\bU}{\mathbb{U}}
\newcommand{\bZ}{\mathbb{Z}}
\newcommand{\zed}{\mathbb{Z}}

\newcommand{\Fix}{\mathrm{Fix}}
\newcommand{\sgn}{\mathrm{sgn}}
\newcommand{\Res}{\mathrm{Res}}
\newcommand{\rNW}{\mathrm{NW}}
\newcommand{\rKL}{\mathrm{Kl}}
\newcommand{\PSL}{\mathrm{PSL}}
\newcommand{\GL}{\mathrm{GL}}

\newcommand{\E}{\mathbf{E}}
\newcommand{\Var}{\mathbf{Var}}
\newcommand{\one}{\mathbf{1}}

\newcommand{\vlambda}{{\mathbf{\lambda}}}
\newcommand{\vrho}{{\mathbf{\rho}}}
\newcommand{\vdelta}{{\mathbf{\delta}}}
\newcommand{\vmu}{{\mathbf{\mu}}}
\newcommand{\vx}{{\mathbf{x}}}
\newcommand{\vr}{{\mathbf{r}}}
\newcommand{\vz}{{\mathbf{z}}}
\newcommand{\ve}{{\mathbf{e}}}
\newcommand{\vk}{{\mathbf{k}}}

\newcommand{\cC}{{\mathcal{C}}}
\newcommand{\cD}{{\mathcal{D}}}
\newcommand{\cB}{{\mathcal{B}}}
\newcommand{\cF}{{\mathcal{F}}}
\newcommand{\cN}{{\mathcal{N}}}
\newcommand{\cP}{{\mathcal{P}}}
\newcommand{\cI}{{\mathcal{I}}}
\newcommand{\cS}{{\mathcal{S}}}

\newcommand{\fa}{{\mathfrak{a}}}

\newcommand{\sB}{{\mathscr{B}}}
\newcommand{\sC}{{\mathscr{C}}}
\newcommand{\sD}{{\mathscr{D}}}
\newcommand{\sF}{{\mathscr{F}}}
\newcommand{\sG}{{\mathscr{G}}}
\newcommand{\sH}{{\mathscr{H}}}
\newcommand{\sI}{{\mathscr{I}}}
\newcommand{\sK}{{\mathscr{K}}}
\newcommand{\sL}{{\mathscr{L}}}
\newcommand{\sM}{{\mathscr{M}}}
\newcommand{\sN}{{\mathscr{N}}}
\newcommand{\sO}{{\mathscr{O}}}
\newcommand{\sP}{{\mathscr{P}}}
\newcommand{\sS}{{\mathscr{S}}}

\newcommand{\ug}{\underline{g}}
\newcommand{\ul}{\underline{\ell}}
\newcommand{\un}{\underline{n}}
\newcommand{\uw}{\underline{w}}
\newcommand{\ux}{\underline{x}}

\newcommand{\Ts}{\tilde{s}}
\newcommand{\Tt}{\tilde{t}}

\theoremstyle{plain}
\newtheorem{conj}{Conjecture}
\newtheorem{remark}[subsection]{Remark}

\makeatletter
\def\widebreve{\mathpalette\wide@breve}
\def\wide@breve#1#2{\sbox\z@{$#1#2$}%
     \mathop{\vbox{\m@th\ialign{##\crcr
\kern0.08em\brevefill#1{0.8\wd\z@}\crcr\noalign{\nointerlineskip}%
                    $\hss#1#2\hss$\crcr}}}\limits}
\def\brevefill#1#2{$\m@th\sbox\tw@{$#1($}%
  \hss\resizebox{#2}{\wd\tw@}{\rotatebox[origin=c]{90}{\upshape(}}\hss$}
\makeatletter

\title[Upper bounds for moments of Dirichlet $L$-functions to a fixed modulus]{Upper bounds for moments of Dirichlet $L$-functions to a fixed modulus}

\author[P. Gao]{Peng Gao}
\address{School of Mathematical Sciences, Beihang University, Beijing 100191, China}
\email{penggao@buaa.edu.cn}

\author[L. Zhao]{Liangyi Zhao}
\address{School of Mathematics and Statistics, University of New South Wales, Sydney NSW 2052, Australia}
\email{l.zhao@unsw.edu.au}

\begin{abstract}
We study the $2k$-th moment of central values of the family of Dirichlet $L$-functions to a fixed prime modulus and establish sharp upper bounds for all real $k \in [0,2]$.
\end{abstract}

\maketitle

\noindent {\bf Mathematics Subject Classification (2010)}: 11M06  \newline

\noindent {\bf Keywords}: moments, Dirichlet $L$-functions, upper bounds

\section{Introduction}
\label{sec 1}

  Let $L(s, \chi)$ be the $L$-function attached to a primitive Dirichlet character $\chi$ modulo $q$.  Throughout the paper, we assume $q \not \equiv 2 \pmod 4$ to ensure the existence of such primitive characters. We also write $\phis(q)$ for the number of primitive characters modulo $q$ and $\sum^{\star}$ the sum over primitive Dirichlet characters modulo $q$. It was conjectured by J. B. Conrey, D. W. Farmer, J. P. Keating, M. O. Rubinstein and N. C. Snaith in \cite{CFKRS} that for all positive integral values of $k$ and explicit constants $C_k$,
\begin{align}
\label{moments}
 \sumstar_{\substack{ \chi \shortmod q }}|L(\tfrac{1}{2},\chi)|^{2k} \sim C_k \phis(q)(\log q)^{k^2}.
\end{align}
In fact,  the above is believed (see \cite{R&Sound}) to hold for all real $k  \geq 0$. The case $k=1$ of \eqref{moments} follows from a result of A. Selberg \cite{Selberg46} on a more general twisted second moment.  The formula with $k=2$ of \eqref{moments} was first obtained by D. R. Heath-Brown \cite{HB81} for all most all $q$,  and was later improved by K. Soundararajan \cite{Sound2007} to hold for all $q$. An asymptotic formula for $k=2$ with a power saving error term was first achieved for primes $q$ by M. P. Young \cite{Young2011}. See \cites{BFKMM1,BFKMM,BM15,BFKMMS} for subsequent improvements on the error term in Young's result. The result in \cite{Young2011} was extended to be valid  for all sufficiently factorable $q$ including $99.9\%$ of all admissible moduli by V. Blomer and D. Mili\'cevi\'c \cite{BM15}, and was further shown to hold for all general moduli $q$ by X. Wu \cite{Wu2020}. \newline
 
 Besides the asymptotic formula \eqref{moments}, we have upper and lower bounds of the conjectured order of magnitude for the expression on the left-hand side of \eqref{moments}.  See \cites{BPRZ, Sound01,HB2010, Harper, R&Sound1, Radziwill&Sound1, C&L, Gao2024, AC25} for the related results.  Note that in \cite{Gao2024}, using the upper bounds principle of M. Radziwi{\l\l} and K. Soundararajan  \cite{Radziwill&Sound} and the lower bounds principle of W. Heap and K. Soundararajan  \cite{H&Sound}, it is shown that for large prime $q$, 
\begin{align}
\label{lowerupperbound}
\begin{split}
   \sumstar_{\substack{ \chi \shortmod q }}|L(\tfrac{1}{2},\chi)|^{2k} \gg_k & \phis(q)(\log q)^{k^2}, \quad \text{for all } k \geq 0, \\
    \sumstar_{\substack{ \chi \shortmod q }}|L(\tfrac{1}{2},\chi)|^{2k} \ll_k & \phis(q)(\log q)^{k^2}, \quad \text{for } 0 \leq k \leq 1.
\end{split}
\end{align}  
  
  It is remarked in \cite{BPRZ} that one may apply the work of B. Hough \cite{Hough2016} or R. Zacharias
   \cite{Z2019} on twisted fourth moment for the family of Dirichlet $L$-functions under consideration to obtain sharp upper bounds on all moments
   below the fourth. It is the aim of this paper to achieve this. Our main result is as follows. 
\begin{theorem}
\label{thmupperbound}
   For large prime $q$ and any real number $k$ with $0 \leq k \leq 2$, we have
\begin{align}
\label{upperbound}
   \sumstar_{\substack{ \chi \shortmod q }}|L(\tfrac{1}{2},\chi)|^{2k} \ll_k \phis(q)(\log q)^{k^2}.
\end{align}
\end{theorem}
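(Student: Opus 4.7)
The case $0 \leq k \leq 1$ of \eqref{upperbound} is already contained in the second line of \eqref{lowerupperbound}, while the case $k = 2$ follows immediately from the twisted fourth moment asymptotic of Hough \cite{Hough2016} or Zacharias \cite{Z2019} (specialised to the trivial twist); so the task reduces to the range $1 < k < 2$. The plan is to implement the upper bounds principle of Radziwi{\l\l} and Soundararajan \cite{Radziwill&Sound} for the family $\{L(\hhalf, \chi) : \chi \shortmod q\}$, using the twisted fourth moment as the crucial additional input which extends the admissible range of $k$ from $1$ up to $2$.

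First, I would carry out the standard prime decomposition. Fix a slowly growing parameter $J = J(q)$ and split the primes in $(1, q]$ into intervals $I_j = (q^{1/\ell_{j+1}}, q^{1/\ell_j}]$, with $\ell_1 < \ell_2 < \cdots < \ell_{J+1}$ chosen so that $\sum_{j \leq J} \ell_j^{-1}$ is bounded. Writing $\cP_j(\chi) = \sum_{p \in I_j} \chi(p) p^{-1/2}$, define the truncated exponentials
\begin{align*}
  \cN_j(\chi, k) = \sum_{0 \leq r \leq R_j} \frac{(k \cP_j(\chi))^r}{r!},
\end{align*}
choosing $R_j$ so that $\cN_j(\chi,k)$ is a good proxy for $\exp(k \cP_j(\chi))$ in the regime $|\cP_j(\chi)| \leq R_j/(ek)$, while the full product $\prod_j \cN_j(\chi,k)$ remains a Dirichlet polynomial of length strictly below $q^{1-\varepsilon}$. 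The Radziwi{\l\l}--Soundararajan majorization then yields, on the ``good'' set $\mathcal{G} = \{\chi : |\cP_j(\chi)| \leq R_j/(ek) \text{ for every } j\}$, the pointwise bound $|L(\hhalf, \chi)|^{2k} \ll_k \prod_j |\cN_j(\chi, k)|^2$. Expanding and invoking orthogonality of primitive characters modulo the prime $q$, the standard multiplicative computation then produces a bound of size $\phis(q)(\log q)^{k^2}$ for the sum over $\chi \in \mathcal{G}$.

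The crux of the argument is handling the contribution from the complementary ``bad'' sets $\cB_j = \{\chi : |\cP_j(\chi)| > R_j/(ek)\}$, and this is precisely where the case $1 < k < 2$ genuinely goes beyond \eqref{lowerupperbound}. On $\cB_j$ I would use the inequality $\one_{\cB_j}(\chi) \leq (ek|\cP_j(\chi)|/R_j)^{2n}$ for a well-chosen integer $n$, and apply H\"older's inequality with conjugate exponents $2/k$ and $2/(2-k)$ to split
\begin{align*}
  \sumstar_{\substack{\chi \shortmod q}} |L(\hhalf, \chi)|^{2k} |\cP_j(\chi)|^{2n} \leq \Bigl(\sumstar_{\substack{\chi \shortmod q}} |L(\hhalf, \chi)|^{4} |\cP_j(\chi)|^{2m_1}\Bigr)^{k/2}\Bigl(\sumstar_{\substack{\chi \shortmod q}} |\cP_j(\chi)|^{2m_2}\Bigr)^{(2-k)/2},
\end{align*}
with $m_1, m_2$ chosen so that $(k/2)m_1 + ((2-k)/2)m_2 = n$. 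The first factor is a twisted fourth moment against the Dirichlet polynomial $\cP_j^{m_1}$; provided its length $q^{m_1/\ell_j}$ stays below the admissible threshold of \cite{Hough2016} or \cite{Z2019} (essentially $q^{1/2 - \varepsilon}$), this is bounded in the expected way by $\phis(q)(\log q)^4$ times a combinatorial factor. The second factor is the $2m_2$-th moment of a short prime sum and is handled directly by character orthogonality. Combining with the savings factor $(ek/R_j)^{2n}$ and optimizing, I expect the bad-set contribution to be negligible compared to $\phis(q)(\log q)^{k^2}$. The principal obstacle will be the simultaneous choice of parameters $J$, $\ell_j$, $R_j$, $n$, $m_1$, $m_2$ so that every Dirichlet polynomial remains within the admissible length range of the twisted fourth moment, the combinatorial factors from moments of $|\cP_j|$ are absorbed by negative powers of $\ell_j$, and the final bound assembles to the target exponent $k^2$ rather than $k^2 + o(1)$.
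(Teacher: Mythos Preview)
Your reduction to $1 < k < 2$ and your instinct to bring in the twisted fourth moment via H\"older with exponents $2/k$ and $2/(2-k)$ are both right, and indeed match the paper's strategy. However, your treatment of the good set contains a genuine gap. You claim that on $\mathcal{G}$ the Radziwi{\l\l}--Soundararajan method yields the pointwise bound $|L(\tfrac12,\chi)|^{2k} \ll_k \prod_j |\cN_j(\chi,k)|^2$. This would amount to an unconditional pointwise upper bound $\log|L(\tfrac12,\chi)| \leq \Re\sum_{p\leq q^{1/\ell_R}} \chi(p)p^{-1/2} + O(1)$, which is a GRH-strength statement (it is what one gets from Soundararajan's or Harper's conditional arguments). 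Unconditionally, $|L(\tfrac12,\chi)|$ could be as large as $q^{1/4+o(1)}$ even when all the $\cP_j(\chi)$ are small, so no such pointwise majorization is available, and the good-set contribution cannot be handled this way.

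The paper circumvents exactly this issue: rather than majorizing $|L|^{2k}$ pointwise, it constructs a majorant for the constant function $1$. Concretely, one checks that on the event where $v$ is the first index with $|\cP_{v+1}(\chi)|$ large (and all earlier $\cP_j$ are small), the product $\prod_{j\leq v}|\cN_j(\chi,k-2)|^{2k}|\cN_j(\chi,k)|^{2(2-k)}$ is essentially $\exp(0)=1$, while a power $|\cQ_{v+1}(\chi,k)|^4$ absorbs the failure at index $v+1$. Summing over $v$ gives a quantity $\gg 1$ uniformly in $\chi$, which one multiplies into $\sum^\star_\chi|L(\tfrac12,\chi)|^{2k}$ before applying H\"older. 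After H\"older the first factor carries $|L|^4$ against the Dirichlet polynomial $\prod_{j\leq v}|\cN_j(\chi,k-2)|^2|\cQ_{v+1}|^2$ (evaluated via the twisted fourth moment), and the second factor carries only $\prod_{j\leq v}|\cN_j(\chi,k)|^2|\cQ_{v+1}|^2$ (evaluated by orthogonality). The key point you are missing is the appearance of the parameter $k-2$ in the first family of mollifiers: it is precisely the choice that makes $|\cN_j(\chi,k-2)|^{2k}|\cN_j(\chi,k)|^{2(2-k)}\approx 1$ on the good event, so that the majorant bounds $1$ rather than $|L|^{2k}$. Your bad-set H\"older step is essentially a special case of this same mechanism, but without the correct good-set construction the argument does not close.
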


Combining \eqref{lowerupperbound} and \eqref{upperbound}, we immediately deduce the next result on the order of magnitude of the family of $L$-functions under our consideration.
\begin{theorem}
\label{thmorderofmag}
   For large prime $q$ and any real number $k$ such that $0 \leq k \leq 2$, we have
\begin{align*}
   \sumstar_{\substack{ \chi \shortmod q }}|L(\tfrac{1}{2},\chi)|^{2k} \asymp \phis(q)(\log q)^{k^2}.
\end{align*}
\end{theorem}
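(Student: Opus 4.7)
The plan is to establish \eqref{upperbound} via the upper bounds principle of Radziwi{\l\l}--Soundararajan in the form used in \cite{Gao2024}, but with the twisted fourth moment of Hough \cite{Hough2016} or Zacharias \cite{Z2019} as the main analytic input in place of Selberg's twisted second moment. Since the bound for $0 \le k \le 1$ is already contained in \eqref{lowerupperbound}, we restrict attention to the range $1 \le k \le 2$.

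Write $\mathcal{L} = \log\log q$. We partition an interval $[\alpha_0, \mathcal{L}]$, for some large constant $\alpha_0$, into subintervals $I_j = (T_{j-1}, T_j]$, $1 \le j \le J$, of geometric growth, and form short prime sums $\mathcal{P}_j(\chi) = \sum_{p \in I_j} \chi(p)/\sqrt{p}$ together with their truncated exponentials $\mathcal{N}_j(\chi, k-2) = \sum_{r \le \ell_j} (k-2)^r \mathcal{P}_j(\chi)^r/r!$ for suitable $\ell_j$. The product $\mathcal{N}(\chi) = \prod_j \mathcal{N}_j(\chi, k-2)$ is a Dirichlet polynomial of length $\le q^{\theta}$ for any fixed $\theta > 0$, and on the ``good'' set $\mathcal{G}$ of characters for which $|\mathcal{P}_j(\chi)| \le \ell_j/e^2$ holds for every $j$, it closely approximates $L(\tfrac12,\chi)^{k-2}$. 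The complement $\mathcal{G}^c$ is sparse: standard high-moment estimates for $\mathcal{P}_j(\chi)$ over primitive characters modulo the prime $q$ give, via orthogonality, a bound of the form $\sumstar_\chi |\mathcal{P}_j(\chi)|^{2r} \ll \phis(q)\, r!\, (\log T_j)^r$ as long as the length of $\mathcal{P}_j^r$ is $\le q^{1-\epsilon}$, and Chebyshev's inequality then shows that $|\mathcal{G}^c|$ is small enough that its contribution to $\sumstar_\chi |L(\tfrac12,\chi)|^{2k}$, estimated by H\"older against $\sumstar_\chi |L(\tfrac12,\chi)|^4 \ll \phis(q) (\log q)^4$, is $o(\phis(q)(\log q)^{k^2})$.

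On $\mathcal{G}$ one has $|\mathcal{N}(\chi)| \asymp |L(\tfrac12,\chi)|^{k-2}$, which combined with $|L|^{2k} = |L|^4 \cdot |L|^{2(k-2)}$ yields the pointwise bound $|L(\tfrac12,\chi)|^{2k} \ll |L(\tfrac12,\chi)|^4 \, |\mathcal{N}(\chi)|^2$. Expanding $|\mathcal{N}(\chi)|^2 = \sum_{a,b} c(a,b) \chi(a) \overline{\chi(b)}$ as a Dirichlet polynomial supported on pairs $(a,b)$ with $ab \le q^{2\theta}$ and $(ab, q) = 1$, and invoking the twisted fourth moment asymptotic
\[ \sumstar_\chi |L(\tfrac12,\chi)|^4 \chi(a) \overline{\chi(b)} = \phis(q) M_4(a,b) + O(E(a,b)) \]
from \cite{Hough2016} or \cite{Z2019}, the proof reduces to the estimate $\sum_{a,b} c(a,b) M_4(a,b) \ll (\log q)^{k^2}$.

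This final evaluation is a multiplicative exercise: we exploit the fact that $c(a,b)$ factors across the disjoint prime intervals $I_j$, note the compatible Euler-product decomposition of $M_4(a,b)$, compute each local factor explicitly from the truncated-exponential structure of $\mathcal{N}_j$, and verify that the combined Euler product contributes precisely $(\log q)^{k^2-4}$, so that together with the $(\log q)^4$ coming from $M_4$ the total size is $\phis(q)(\log q)^{k^2}$. The main obstacle is this last bookkeeping: one must confirm that the Hough--Zacharias main term $M_4(a,b)$ has the right local shape to combine with $c(a,b)$ into the conjectured log-power, and one must choose the parameters $\theta, \ell_j, T_j$ so that the error $E(a,b)$ from the twisted fourth moment, summed against $|c(a,b)|$, remains within $o(\phis(q)(\log q)^{k^2})$. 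Balancing the admissible twist length $q^{2\theta}$ against the accuracy of the truncated-exponential approximation is the technical heart of the proof.
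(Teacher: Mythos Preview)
Your sketch contains a genuine gap at the pointwise step. You assert that on the good set $\mathcal{G}$ one has $|\mathcal{N}(\chi)| \asymp |L(\tfrac12,\chi)|^{k-2}$, and from this deduce $|L(\tfrac12,\chi)|^{2k} \ll |L(\tfrac12,\chi)|^4\,|\mathcal{N}(\chi)|^2$. But the condition defining $\mathcal{G}$ only controls the short prime sums $\mathcal{P}_j(\chi)$, so on $\mathcal{G}$ the truncated exponential $\mathcal{N}(\chi)$ approximates $\exp\bigl((k-2)\sum_j \mathcal{P}_j(\chi)\bigr)$, not $L(\tfrac12,\chi)^{k-2}$. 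Unconditionally there is no comparison between $\sum_p \chi(p)/\sqrt{p}$ and $\log L(\tfrac12,\chi)$ on a set of density $1$; zeros near the central point can make these quantities arbitrarily far apart. So the pointwise inequality $|L|^{2(k-2)} \ll |\mathcal{N}(\chi)|^2$ that your argument needs is simply not available.

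The paper (following the Radziwi{\l\l}--Soundararajan mechanism in \cite{Gao2024}) avoids this by never comparing $\mathcal{N}$ to a power of $L$. Instead it introduces a \emph{second} Dirichlet polynomial $\mathcal{N}(\chi,k)$ built from $E_{\ell_j}(k\,\mathcal{P}_j)$ and observes that on the good set
\[
|\mathcal{N}_j(\chi,k-2)|^{k}\,|\mathcal{N}_j(\chi,k)|^{\,2-k}\;\approx\;\bigl|\exp\bigl((k-2)\mathcal{P}_j\bigr)\bigr|^{k}\bigl|\exp\bigl(k\,\mathcal{P}_j\bigr)\bigr|^{\,2-k}=1,
\]
an identity involving \emph{no} $L$-values. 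Off the good set one of the amplifiers $\mathcal{Q}_{v+1}(\chi,k)=(12k^2\mathcal{P}_{v+1}(\chi)/\ell_{v+1})^{r_k\ell_{v+1}}$ is $\ge 1$. Hence the full majorant $\sum_v\prod_j|\mathcal{N}_j(\chi,k-2)|^2|\mathcal{Q}_{v+1}|^2$ raised to the $k/2$ power, times the analogous sum with $\mathcal{N}_j(\chi,k)$ raised to the $(2-k)/2$ power, is $\gg 1$ for \emph{every} $\chi$. H\"older with exponents $2/k$ and $2/(2-k)$ then splits $\sumstar_\chi |L|^{2k}$ into a twisted fourth moment (handled by Proposition~\ref{twisted_fourth_moment_theorem on the central point}) and a pure Dirichlet-polynomial moment with no $L$-values (handled in \cite{Gao2024}). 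Your final ``multiplicative exercise'' with the Hough--Zacharias main term is essentially correct for the first piece, but to reach it you must replace the false pointwise comparison by this two-mollifier H\"older step.
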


   Our proof of Theorem \ref{thmupperbound} is based on the above mentioned upper bounds principle of M. Radziwi{\l\l} and K. Soundararajan \cite{Radziwill&Sound} and follows the approach in \cite{Gao2024}. The proof also relies crucially on the twisted fourth moment of the relevant family of $L$-functions instead of merely the fourth moment, since the arguments require the evaluation the mollified fourth moment (see Proposition \ref{Prop5} below). This roadmap of deriving sharp upper bound for all moments below the fourth of the family Dirichlet $L$-functions modulo $q$ was mentioned in \cite{BPRZ}.  We shall discuss this in detail in the next section.  It will also be important that the relevant Dirichlet polynomials, e.g. \eqref{defPQ}, \eqref{defN}, are short so that certain off-diagonal contribution can be controlled. 

\section{Twisted fourth moment of Dirichlet $L$-functions}
  
  For the purpose of this paper, we need to compute a more general fourth moment, namely, that of central values of Dirichlet $L$-functions to a fixed modulus $q$ twisted by Dirichlet characters evaluated at integers $\ell_1$, $\ell_2$.  Such a result was first obtained by B. Hough \cite[Theorem 4]{Hough2016} (the proof of the result can be found in the preprint arXiv:1304.1241v3) for square-free $\ell_1, \ell_2$. The result was extended to hold for cubic-free $\ell_1$, $\ell_2$ by R. Zacharias \cite{Z2019},  and for general $\ell_1$, $\ell_2$ by D. Liu \cite[Theorem 6.2]{Liu24}.  We actually require the main term to be more explicit than that stated in \cite[Theorem 6.2]{Liu24} which involves with sums of smoothed Dirichlet series.  In fact, such a result has already been derived in the section entitled ``Deduction of the main terms" in \cite{Liu24}. Upon writing the smoothed weight using its integral representation and shifting the contour of integration to the right of $-1/4$ line to pick up the poles arising from $0$ as did in \cite{Hough2016}, we recover readily a more concrete main term matching the one in \cite[Theorem 4]{Hough2016} for general $\ell_1$, $\ell_2$, with an admissible error term. In this process, we also make use of an generalization of Lemma 31 of the above-mentioned arXiv preprint of Hough. \newline
  
To be more precise, we define for any positive integer $n$ and any real number $\lambda$, 
\begin{align*}
\sigma_\lambda(n)=\sum_{d\mid n}d^\lambda. 
\end{align*}  
   We then define for positive integers $\ell, m$ and real numbers $\lambda, \nu, v$
\begin{align*}
&\tau_{\lambda, \nu, v}(\ell;m) = \prod_{p|\ell} \left(\sum_{j \geq 0} \sigma_\lambda(p^{j + m}) \sigma_\nu(p^j)p^{-jv}\right)\bigg/ \prod_{p|\ell}\left(\sum_{j\geq 0} \sigma_\lambda(p^j)\sigma_\nu(p^j)p^{-jv}\right),
\end{align*}
 where we reserve the letter $p$ for a prime number throughout the paper. \newline
 
Then a straightforward generalization of Lemma 31 of Hough's preprint (corresponding to $m=1$ here) gives that 
\begin{align*}
&\tau_{\lambda, \nu, v}(p;m) = 1 + \frac{p^\lambda(p^{\lambda m}-1) \zeta_p(2v-\lambda - \nu)}{(p^{\lambda}-1)\zeta_p(v)\zeta_p(v-\nu)},
\end{align*}
  where $\zeta_p(s) = (1-p^{-s})^{-1}$. We shall also write $\zeta(s)$ for the Riemann zeta function. \newline
  
   We further define for any positive integer $\ell$ and real numbers $\alpha, \beta, \gamma, \delta \in \bC$, the generalized divisor function
\begin{align}
\label{taudef}
 \tau_{\alpha, \beta, \gamma, \delta}(\ell) = \prod_{\substack{ p| \ell \\ p^{\nu}\|\ell}}\left(1 + \frac{p^{\gamma-\delta}(p^{(\gamma-\delta)\nu}-1) \zeta_p(2 + \alpha + \beta + \gamma + \delta)}{(p^{(\gamma-\delta)}-1)\zeta_p(1 +\alpha + \gamma)\zeta_p(1 + \beta + \gamma)}\right).
\end{align}
  
  It follows, from \cite[Theorem 6.2]{Liu24} and our discussions above, that \cite[Theorem 4]{Hough2016} is still valid for general $\ell_1$, $\ell_2$, except that we replace the generalized divisor functions in \cite[Theorem 4]{Hough2016} by the one defined above. Now, we are ready to state the necessary twisted fourth moment result.
\begin{proposition}
\label{twisted_fourth_moment_theorem} With the notation as above, let $0 \leq \vartheta < 1/32$ and $1 \leq \ell_1, \ell_2 \leq q^{\vartheta}$ be positive integers such that $(\ell_1, \ell_2) = 1$. For a set of indeces $S$, let
\[
X_S = \prod_{u \in S} X_u, \quad \mbox{where} \quad X_u = \left(\frac{q}{\pi}\right)^{-u}\frac{\Gamma\left(\frac{\frac{1}{2}-u}{2}\right)}{\Gamma\left(\frac{\frac{1}{2}+u}{2}\right)}. 
\]
 Then there exists $\eta > 0$ such that for $\alpha, \beta, \gamma, \delta \in \left\{z \in \bC: |z| < \eta/\log q \right\}$, for any $\varepsilon>0$, 
\begin{equation} \label{4thmomenteval}
\begin{split}
 M(\alpha, & \beta, \gamma, \delta; \ell_1,\ell_2) := 
 \sumstar_{\substack{ \chi \shortmod q \\ \chi(-1)=1}} L\left(\tfrac{1}{2} + \alpha, \chi \right)L\left(\tfrac{1}{2} + \beta, \chi \right)L\left(\tfrac{1}{2} + \gamma, \overline{\chi} \right)L\left(\tfrac{1}{2} + \delta, \overline{\chi} \right)  \chi(\ell_1)\overline{\chi}(\ell_2)
\\
=& \frac{\tau_{ \alpha, \beta, \gamma, \delta}(\ell_1)\tau_{ \gamma, \delta, \alpha, \beta}(\ell_2)}{\ell_1^{1/2 + \gamma} \ell_2^{1/2+\alpha}}\frac{\zeta(1 +\alpha + \gamma)\zeta(1 + \alpha + \delta) \zeta(1 + \beta + \gamma) \zeta(1 + \beta + \delta)}{\zeta(2 +\alpha + \beta +\gamma+\delta)}\\
& \hspace*{1cm}+ X_{\alpha,\gamma}\frac{\tau_{-\gamma, \beta, -\alpha, \delta}(\ell_1) \tau_{-\alpha, \delta, -\gamma, \beta}(\ell_2)}{\ell_1^{1/2 -\alpha} \ell_2^{1/2 -\gamma}} 
 \frac{\zeta(1-\alpha -\gamma) \zeta(1-\gamma +\delta) \zeta(1 -\alpha + \beta) \zeta(1+\beta + \delta)}{\zeta(2-\alpha + \beta -\gamma +\delta)}\\ 
 & \hspace*{1cm}+ X_{\beta,\gamma}\frac{\tau_{\alpha, -\gamma, -\beta, \delta}(\ell_1) \tau_{-\beta, \delta, \alpha, -\gamma}(\ell_2)}{\ell_1^{1/2 -\beta} \ell_2^{1/2 + \alpha}} 
 \frac{\zeta(1+\alpha - \beta) \zeta(1+\alpha +\delta) \zeta(1-\beta-\gamma) \zeta(1-\gamma + \delta)}{\zeta(2+\alpha - \beta -\gamma +\delta)}\\
 & \hspace*{1cm}+ X_{\alpha,\delta}\frac{\tau_{-\delta, \beta, \gamma, -\alpha}(\ell_1) \tau_{\gamma, -\alpha, -\delta, \beta}(\ell_2)}{\ell_1^{1/2 +\gamma} \ell_2^{1/2 - \delta}}  
  \frac{\zeta(1+\gamma-\delta) \zeta(1-\alpha -\delta) \zeta(1 + \beta + \gamma) \zeta(1-\alpha+\beta)}{\zeta(2-\alpha + \beta +\gamma -\delta)}\\
 &  \hspace*{1cm}+ X_{\beta,\delta}\frac{\tau_{\alpha, -\delta, \gamma, -\beta}(\ell_1) \tau_{\gamma, -\beta, \alpha, -\delta}(\ell_2)}{\ell_1^{1/2 +\gamma} \ell_2^{1/2 + \alpha}} 
   \frac{\zeta(1+\alpha +\gamma) \zeta(1+\alpha-\beta) \zeta(1 +\gamma - \delta) \zeta(1-\beta - \delta)}{\zeta(2+\alpha - \beta +\gamma -\delta)}\\
  &  \hspace*{1cm}+  X_{\alpha, \beta, \gamma, \delta}\frac{\tau_{ -\gamma, -\delta, -\alpha, -\beta}(\ell_1)\tau_{ -\alpha, -\beta, -\gamma, -\delta }(\ell_2)}{\ell_1^{1/2 -\alpha} \ell_2^{1/2-\gamma}}
     \frac{\zeta(1 -\alpha - \gamma)\zeta(1 - \beta - \gamma) \zeta(1 - \alpha - \delta) \zeta(1 - \beta - \delta)}{\zeta(2 -\alpha - \beta -\gamma-\delta)}\\
  & \hspace*{1cm}+ O\left(\frac{\max(\ell_1,\ell_2)^{1/2}}{q^{1/32-\varepsilon} \min(\ell_1,\ell_2)^{1/2}}\right). 
\end{split}
\end{equation}
\end{proposition}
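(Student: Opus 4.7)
The plan is to obtain \eqref{4thmomenteval} not by a new asymptotic analysis but by processing the already-established twisted fourth moment of Liu \cite[Theorem 6.2]{Liu24}, whose main term, although valid for general $\ell_1, \ell_2$, is expressed as a sum of smoothed Dirichlet series rather than in the explicit Hough-style closed form. Our task is therefore to convert Liu's smoothed main term into the six explicit residue terms displayed above, together with an admissible error.

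First the plan is to write Liu's main term as a contour integral of the shape
\begin{equation*}
\frac{1}{2\pi i}\int_{(c)} G(s)\, \sL(s; \alpha, \beta, \gamma, \delta; \ell_1, \ell_2)\, \frac{ds}{s},
\end{equation*}
where $G$ is an entire test function with $G(0)=1$ and rapid decay in vertical strips, and $\sL$ is a product of $\Gamma$-factors, four shifted $\zeta$-factors, and local Euler products over primes dividing $\ell_1 \ell_2$. Following \cite{Hough2016} one pushes the contour from $\Re(s) = c > 0$ across $s = 0$ to $\Re(s) = -1/4 + \varepsilon$. At $s = 0$ the simple poles corresponding to the swaps of the CFKRS recipe (the identity together with the five non-trivial permutations in the group generated by $\alpha\leftrightarrow -\gamma$, $\alpha\leftrightarrow -\delta$, $\beta\leftrightarrow -\gamma$, $\beta\leftrightarrow -\delta$ and the double swap) yield simple residues whose sum is precisely the six main terms of \eqref{4thmomenteval}. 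The $X_S$ factors appear as ratios of $\Gamma$-factors through the functional equation, and the $\zeta$-ratios come from substituting the permuted shifts into $\sL$.

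Second, the local factors at primes $p\mid \ell_1\ell_2$ must be repackaged into the divisor functions $\tau_{\alpha,\beta,\gamma,\delta}$ defined in \eqref{taudef}. Here the generalization of \cite[Lemma 31]{Hough2016} mentioned in the text is essential: it evaluates the local ratio $\tau_{\lambda,\nu,v}(p; m)$ for arbitrary $m \ge 1$ (the squarefree case $m = 1$ is what Hough originally treated) in closed form, and specialising the parameters $(\lambda, \nu, v)$ to each of the six residues produces the six distinct $\tau$-functions appearing in \eqref{4thmomenteval}. The residual integral on $\Re(s) = -1/4 + \varepsilon$ gives the error; the exponent $1/32$ is inherited directly from Liu's underlying off-diagonal analysis (which is ultimately driven by Weil bounds on Kloosterman sums), and the asymmetric factor $\max(\ell_1, \ell_2)^{1/2}/\min(\ell_1, \ell_2)^{1/2}$ arises because the diagonal main term is normalized by $\ell_1^{1/2+\gamma}\ell_2^{1/2+\alpha}$ while the off-diagonal integral converges absolutely on the shifted line without such normalization. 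Since the shift parameters are confined to $|z| < \eta/\log q$, any inflation by powers of $\log q$ is harmlessly absorbed into $q^\varepsilon$.

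The principal obstacle is the bookkeeping: one has to verify not only that each of the six residues produces the correct $\zeta$-ratio and the correct $X_S$-factor, but also that the permutation of local parameters entering each $\tau$-function matches the one displayed in \eqref{4thmomenteval}. This is the step where Hough's squarefree treatment must genuinely be extended to arbitrary $\ell_1, \ell_2$, and where the generalized Lemma 31 enters in an essential way by producing, at each prime power $p^\nu \| \ell_i$, exactly the local factor appearing in \eqref{taudef}. Once a single swap has been checked in this way, the other five follow from the obvious symmetry of the CFKRS swap group acting on $(\alpha, \beta, \gamma, \delta)$, and the proposition follows by assembling the six residues with the contour-tail error.
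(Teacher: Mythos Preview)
Your proposal is correct and follows essentially the same route as the paper: start from Liu's Theorem~6.2, represent the smoothed main term as a contour integral, shift to the line $\Re(s)=-1/4+\varepsilon$ picking up the poles at $s=0$ (exactly as in Hough), and invoke the generalization of Hough's Lemma~31 to arbitrary exponents $m$ in order to identify the local factors at primes dividing $\ell_1\ell_2$ with the generalized divisor function $\tau_{\alpha,\beta,\gamma,\delta}$ of \eqref{taudef}. The paper's own argument is precisely this, stated somewhat more tersely in the paragraphs preceding the proposition.
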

 
  We note that the main term in the above expression is consistent with a conjectural formula of J. B. Conrey, D. W. Farmer, J. P. Keating,
  M. O. Rubinstein and N. C. Snaith in \cite{CFKRS}.  Note further that $M(\alpha, \beta, \gamma, \delta; \ell_1,\ell_2)$ must be invariant under the swapping of variables, $\alpha \leftrightarrow \beta$, which implies in particular that (as one can check directly using the definition of $\tau$ in \eqref{taudef}) for any real numbers $\alpha$, $\beta$, $\gamma$, $\delta$ and any positive integer $\ell$, 
\begin{align*}
\begin{split}
  \tau_{\alpha, \beta, \gamma, \delta}(\ell) = \tau_{\beta, \alpha,  \gamma, \delta}(\ell), \quad
  \frac{\tau_{\alpha, \beta, \gamma, \delta}(\ell)}{\ell^{1/2+\gamma}} =& \frac{\tau_{\alpha, \beta, \delta, \gamma}(\ell)}{\ell^{1/2 + \delta}}.
\end{split} 
\end{align*}    
  
We are in fact interested in the limit when all parameters $\alpha$, $\beta$, $\gamma$, $\delta$ are taken to zero.  We note first that, as already pointed out in \cite{Young2011}, both the main and the error terms in \eqref{4thmomenteval} are holomorphic with respect to the shift parameters $\alpha$, $\beta$, $\gamma$, $\delta$. Hence in the limit, we may set aside the error term (as it remains the same) as well as the contributions from the poles (as they cancel each other) in the main term and concentrate only on the computations on the constant term arising from the main term. To do so, we first set $\alpha=0$ in the main term above and it becomes
\begin{align*}
& \frac{\tau_{ 0, \beta, \gamma, \delta}(\ell_1)\tau_{ \gamma, \delta, 0, \beta}(\ell_2)}{\ell_1^{1/2 + \gamma} \ell_2^{1/2}}\frac{\zeta(1 + \gamma)\zeta(1  + \delta) \zeta(1 + \beta + \gamma) \zeta(1 + \beta + \delta)}{\zeta(2  + \beta +\gamma+\delta)}\\
& \hspace*{1cm}+ X_{\gamma}\frac{\tau_{-\gamma, \beta, 0, \delta}(\ell_1) \tau_{0, \delta, -\gamma, \beta}(\ell_2)}{\ell_1^{1/2} \ell_2^{1/2 -\gamma}} 
 \frac{\zeta(1-\gamma) \zeta(1-\gamma +\delta) \zeta(1  + \beta) \zeta(1+\beta + \delta)}{\zeta(2 + \beta -\gamma +\delta)}\\ 
& \hspace*{1cm}+ X_{\beta,\gamma}\frac{\tau_{0, -\gamma, -\beta, \delta}(\ell_1) \tau_{-\beta, \delta, 0, -\gamma}(\ell_2)}{\ell_1^{1/2 -\beta} \ell_2^{1/2}} 
 \frac{\zeta(1 - \beta) \zeta(1 +\delta) \zeta(1-\beta-\gamma) \zeta(1-\gamma + \delta)}{\zeta(2 - \beta -\gamma +\delta)}\\
 & \hspace*{1cm}+ X_{\delta}\frac{\tau_{-\delta, \beta, \gamma, 0}(\ell_1) \tau_{\gamma, 0, -\delta, \beta}(\ell_2)}{\ell_1^{1/2 +\gamma} \ell_2^{1/2 - \delta}}  
  \frac{\zeta(1+\gamma-\delta) \zeta(1 -\delta) \zeta(1 + \beta + \gamma) \zeta(1+\beta)}{\zeta(2 + \beta +\gamma -\delta)}\\
  & \hspace*{1cm}+ X_{\beta,\delta}\frac{\tau_{0, -\delta, \gamma, -\beta}(\ell_1) \tau_{\gamma, -\beta, 0, -\delta}(\ell_2)}{\ell_1^{1/2 +\gamma} \ell_2^{1/2}} 
   \frac{\zeta(1 +\gamma) \zeta(1-\beta) \zeta(1 +\gamma - \delta) \zeta(1-\beta - \delta)}{\zeta(2 - \beta +\gamma -\delta)}\\
  &  \hspace*{1cm} +  X_{\beta, \gamma, \delta}\frac{\tau_{ -\gamma, -\delta, 0, -\beta}(\ell_1)\tau_{ 0, -\beta, -\gamma, -\delta }(\ell_2)}{\ell_1^{1/2} \ell_2^{1/2-\gamma}}
     \frac{\zeta(1 - \gamma)\zeta(1 - \beta - \gamma) \zeta(1 - \delta) \zeta(1 - \beta - \delta)}{\zeta(2  - \beta -\gamma-\delta)}\\
  & =:  S_1+S_2+S_3+S_4+S_5+S_6, \quad \mbox{say}.
\end{align*}   

We then take $\gamma \rightarrow 0$ followed by $\delta \rightarrow -\beta$. By doing so, the sum $S_3+S_4$ produces no poles in the process, so that
\begin{align}
\label{limnopole}
\begin{split}
 & \lim_{\delta \rightarrow -\beta} \Big (\lim_{\gamma \rightarrow 0} (S_3+S_4) \Big ) \\
=& \lim_{\delta \rightarrow -\beta} \Big (X_{\beta}\frac{\tau_{0, 0, -\beta, \delta}(\ell_1) \tau_{-\beta, \delta, 0, 0}(\ell_2)}{\ell_1^{1/2 -\beta} \ell_2^{1/2}} 
 \frac{\zeta(1 - \beta)^2 \zeta(1 +\delta)^2 }{\zeta(2 - \beta  +\delta)}+ X_{\delta}\frac{\tau_{-\delta, \beta, 0, 0}(\ell_1) \tau_{0, 0, -\delta, \beta}(\ell_2)}{\ell_1^{1/2} \ell_2^{1/2 - \delta}}  
  \frac{\zeta(1-\delta)^2   \zeta(1 + \beta )^2}{\zeta(2 + \beta  -\delta)} \Big ) \\
=& X_{\beta}\frac{\tau_{0, 0, -\beta, -\beta}(\ell_1) \tau_{-\beta, -\beta, 0, 0}(\ell_2)}{\ell_1^{1/2 -\beta} \ell_2^{1/2}} 
 \frac{\zeta(1 - \beta)^4 }{\zeta(2 - 2\beta)} + X_{-\beta}\frac{\tau_{\beta, \beta, 0, 0}(\ell_1) \tau_{0, 0, \beta, \beta}(\ell_2)}{\ell_1^{1/2} \ell_2^{1/2 + \beta}}  
  \frac{  \zeta(1 + \beta )^4}{\zeta(2 +2 \beta  )}.  
 \end{split}
\end{align}      
    
     Note moreover that by \cite[Corollary 1.16]{MVa1}, the Laurent expansion of $\zeta(s)$ $s=1$ gives 
\begin{align}
\label{zetaexpansion}
\begin{split}
 \zeta(s)=\frac 1{s-1}+\gamma_0+O(s-1), \quad |s-1| \leq 1,
\end{split}
\end{align}        
   where $\gamma_0$ is Euler's constant. \newline
   
   The above implies that both the terms $S_1$ and $S_2$ have a simple pole under the limit $\gamma \rightarrow 0$.  However, the poles of $\zeta(1+\gamma)$ and $\zeta(1-\gamma)$ cancel each other. We thus apply \eqref{zetaexpansion} to see that, 
\begin{align}
\label{limgamma0S1S2}
\begin{split}
 \lim_{\gamma \rightarrow 0} (S_1+S_2) =& \frac{\tau_{ 0, \beta, 0, \delta}(\ell_1)\tau_{ 0, \delta, 0, \beta}(\ell_2)}{\ell_1^{\frac{1}{2}} \ell_2^{\frac{1}{2}}}\frac{\zeta(1  + \delta) \zeta(1 + \beta ) \zeta(1 + \beta + \delta)}{\zeta(2  + \beta +\delta)}\\
& \hspace*{1cm} \times \Big (\log \frac{q}{\pi}+C_0+ \frac {\zeta'(1+\beta )}{\zeta(1+\beta)}+\frac {\zeta'(1 +\delta)}{\zeta(1 +\delta)}-2\frac{\zeta'(2+\beta +\delta)}{\zeta(2+\beta +\delta)} \\
& \hspace*{1.5cm} +\sum_{\substack{ p| \ell_1 \\ p^\nu\|\ell_1}}\left(1 + \frac{p^{\delta}(p^{\delta \nu}-1) \zeta_p(2 +\beta + \delta)}{(p^{\delta}-1)\zeta_p(1+\delta)\zeta_p(1 + \beta +\delta )}\right)^{-1}\\
 & \hspace*{2cm} \times \Big (\frac{p^{\delta}(p^{\delta \nu}-1) (2\zeta'_p(2 +\beta  + \delta)-\zeta'_p(1  + \delta))}{(p^{\delta}-1)\zeta_p(1 +\delta)\zeta_p(1 + \beta + \delta)} \\
 &\hspace*{2.5cm} -\frac{\log p^\nu p^{\delta \nu}((p^{\delta}-1)^2+(p^{\delta}-1))-\log p p^{\delta}(p^{\delta \nu}-1)) \zeta_p(2 +\beta  + \delta)}{(p^{\delta}-1)^2\zeta_p(1 +\delta)\zeta_p(1 + \beta + \delta)} \Big )  \\
& \hspace*{1.5cm} +\sum_{\substack{ p| \ell_2 \\ p^\nu\|\ell_2}}\left(1 + \frac{p^{\beta}(p^{\beta \nu}-1) \zeta_p(2 +\beta + \delta)}{(p^{\beta}-1)\zeta_p(1 +\beta)\zeta_p(1+\beta+\delta)}\right)^{-1} \\
 & \hspace*{2cm} \times \Big (\frac{p^{\beta}(p^{\beta \nu}-1) (2\zeta'_p(2 +\beta  + \delta)-\zeta'_p(1  + \beta))}{(p^{\beta}-1)\zeta_p(1 +\beta)\zeta_p(1 + \beta + \delta)} \\
 & \hspace*{2.5cm} -\frac{\log p^\nu p^{\beta \nu}((p^{\beta}-1)^2+(p^{\beta}-1))-\log p p^{\beta}(p^{\beta \nu}-1)) \zeta_p(2 +\beta  + \delta)}{(p^{\beta}-1)^2\zeta_p(1 +\beta)\zeta_p(1 + \beta + \delta)} \Big ) \Big), 
\end{split}
\end{align}          
    where $C_0$ is a constant involving $\gamma_0$ and derivatives of $\Gamma(s)$.  Now $\lim_{\gamma \rightarrow 0} (S_5+S_6)$ has a similar expression by observing that other than a factor $X_{\beta,\delta}$, the sum $S_5+S_6$ can be obtained from $S_1+S_2$ by a change of variables: $\beta \rightarrow -\delta$, $\delta \rightarrow -\beta$. \newline
    
Next note that the right-hand side of \eqref{limgamma0S1S2} has a simple pole under the limit $\delta \rightarrow -\beta$. We then apply \eqref{zetaexpansion} to compute $\lim_{\delta \rightarrow -\beta}(\lim_{\gamma \rightarrow 0} (S_1+S_2))$ and similarly $\lim_{\delta \rightarrow -\beta}(\lim_{\gamma \rightarrow 0} (S_5+S_6))$. The resulting expressions are now functions of $\beta$ only. We now use them and the expression given in \eqref{limnopole} to compute the limit $\beta \rightarrow 0$. Here we remark again that the expressions remain holomorphic in the process. For example, the fourth order pole on the right-hand side of \eqref{limnopole} cancels with those coming from $\lim_{\delta \rightarrow -\beta}(\lim_{\gamma \rightarrow 0} (S_1+S_2))$ (which involve the terms $\zeta'(1-\beta)\zeta'(1+\beta)$) and from $\zeta''(1-\beta)\zeta(1+\beta)$) and $\lim_{\delta \rightarrow -\beta}(\lim_{\gamma \rightarrow 0} (S_5+S_6))$ (which involve the terms $\zeta''(1+\beta)\zeta(1-\beta)$) and $\zeta'(1-\beta)\zeta'(1+\beta)$) while noting that $\zeta''(s) \sim \frac 2{(s-1)^3}$ when $s$ in near $1$. Similar observations apply to other poles. \newline

For a function $f(x)=\prod^k_{j=1}h(x)$, the product rule (and induction) implies that
\begin{align*}
  f^{(n)}(x)=n!\sum_{\substack{\alpha_1+\ldots+\alpha_k =n \\ \alpha_1 \geq 0, \ldots, \alpha_k \geq 0}}\frac {h^{(\alpha_1)}(x)}{\alpha_1!}\cdot \frac {h^{(\alpha_2)}(x)}{\alpha_2!}\cdot\ldots \cdot \frac {h^{(\alpha_k)}(x)}{\alpha_k!}. 
\end{align*}     
   
    We apply the above to compute the constant term in the expression $\lim_{\delta \rightarrow -\beta}(\lim_{\gamma \rightarrow 0} \sum^6_{j=1}S_j)$, regarded as 
function of $\beta$.  In particular, this applies to the evaluation on the $j$-th order ($0 \leq j \leq 4$) derivative of various $\tau_{ *, *, *, *}(\ell)$ for $\ell=\ell_1, \ell_2$ evaluated at $\beta=0$, where $*$ takes value in the set $\{0, \beta, -\beta \}$. Inspection of these derivatives allow us to write them in the form
\begin{align*}
  \sum_{\substack{0 \leq j_1+\ldots+j_4 \leq 4 \\ j_1 \geq 0, \ldots, j_4 \geq 0}}\sum_{\substack{ 1 \leq i \leq 4 \\ p^{\nu_i}_i \| \ell, \nu_i \geq 1 \\ p_i \text{distinct} }}  C_{j_1, \ldots, j_4,i}(p_i)(\log p^{\nu_i}_i)^{j_i},
\end{align*}   
 where $C_{j_1, \ldots, j_4,i}(p_i)$ are constants depending on $p_i$ only such that $C_{j_1, \ldots, j_4,i}(p_i) \ll 1$. \newline
  
  Taking into account of the derivatives of other factor, as well as the observation that for the $\tau_{ *, *, *, *}(\ell)$ considered above, 
\begin{align*}
  \lim_{\beta \rightarrow 0}\tau_{ *, *, *, *}(\ell)=\prod_{\substack{ p| \ell \\ p^{\nu}\|\ell}}\left(1 + \frac{\nu (1-1/p)}{1+1/p}\right) =: \tau_0(\ell),
\end{align*}   
  we deduce now the following result on the twisted fourth moment of Dirichlet $L$-functions at the central point.
\begin{proposition}
\label{twisted_fourth_moment_theorem on the central point} With the notation as above, let $0 \leq \vartheta < 1/32$ and $1 \leq \ell_1, \ell_2 \leq q^{\vartheta}$ be positive integers with $(\ell_1, \ell_2) = 1$. Then for any $\varepsilon>0$, 
\begin{align}
\label{twistedfourthmoment}
\begin{split}
  \sumstar_{\substack{ \chi \shortmod q \\ \chi(-1)=1}}  \Big|L\left(\tfrac{1}{2}, \chi \right)\Big |^4\chi(\ell_1)\overline{\chi}(\ell_2) = \frac{\phis(q) \tau_{0}(\ell_1)\tau_{ 0}(\ell_2)}{\ell_1^{1/2} \ell_2^{1/2} \zeta(2 )} & \Big (\sum_{\substack{0 \leq j_0+j_1+\ldots+j_4 \leq 4 \\ j_0 \geq 0, j_1 \geq 0, \ldots, j_4 \geq 0}}(\log q)^{j_0}\sum_{\substack{ 1 \leq i \leq 4 \\ p^{\nu_i}_i \| \ell_1\ell_2, \nu_i \geq 1 \\ p_i \text{distinct} }}  C_{j_0, \ldots, j_4,i}(p_i)(\log p^{\nu_i}_i)^{j_i}\Big ) \\
  & + O\left(\frac{\max(\ell_1,\ell_2)^{1/2}}{q^{1/32-\varepsilon} \min(\ell_1,\ell_2)^{1/2}}\right),
\end{split}
\end{align}
  where $C_{j_0, \ldots, j_4,i}(p_i)$ are constants whose values depend on $p_i$ only and $C_{j_0, \ldots, j_4,i}(p_i) \ll 1$. Moreover, $C_{j_0, 0, 0, 0,0,i}(p_i)$ are absolute constants with expressions involving $\gamma_0$, $\Gamma'(\frac 14)$ and $\Gamma(\frac 14)$. 
\end{proposition}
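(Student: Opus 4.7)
The plan is to derive Proposition~\ref{twisted_fourth_moment_theorem on the central point} from Proposition~\ref{twisted_fourth_moment_theorem} by isolating the constant term of the six-term main sum in \eqref{4thmomenteval} as all four shifts $\alpha,\beta,\gamma,\delta$ approach zero. Since $M(\alpha,\beta,\gamma,\delta;\ell_1,\ell_2)$ is entire in the shifts (by standard analytic continuation of Dirichlet $L$-functions) and the error term is likewise holomorphic in them, the sum $S_1+\cdots+S_6$ represents a holomorphic function at the origin even though individual summands carry zeta-pole factors. The task therefore reduces to carefully bookkeeping the cancellations and extracting the Taylor constant.

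I would perform the limits in the order indicated by the author: set $\alpha=0$, take $\gamma\to 0$, then $\delta\to -\beta$, then $\beta\to 0$. The pair $S_3+S_4$ is already regular at $\gamma=0$; the pairs $S_1+S_2$ and $S_5+S_6$ each exhibit a simple pole at $\gamma=0$ whose residue vanishes thanks to the Laurent expansion $\zeta(1\pm\gamma)=\pm\gamma^{-1}+\gamma_0+O(\gamma)$ from \eqref{zetaexpansion} combined with $X_\gamma = 1+\gamma\bigl(-\log(q/\pi)+\Gamma\text{-derivative terms}\bigr)+O(\gamma^2)$. Expanding to one higher order in $\gamma$ and collecting the constant yields the explicit formula \eqref{limgamma0S1S2} for $\lim_{\gamma\to 0}(S_1+S_2)$; the companion computation, obtained via the change of variables $\beta\mapsto -\delta,\ \delta\mapsto -\beta$ together with the extra factor $X_{\beta,\delta}$, gives $\lim_{\gamma\to 0}(S_5+S_6)$.

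Next I would take $\delta\to -\beta$. The pair $S_3+S_4$ stays regular and produces \eqref{limnopole}, while the other two pairs each present a simple pole in $\beta+\delta$ arising from $\zeta(1+\beta+\delta)$, which cancels internally by another application of \eqref{zetaexpansion}. What survives is a function of $\beta$ alone. The final limit $\beta\to 0$ requires cancellation of a fourth-order pole in $\beta$: the $\zeta(1-\beta)^4$ factor in \eqref{limnopole} contributes $\beta^{-4}$, which must cancel against contributions from $\lim_{\delta\to -\beta}(\lim_{\gamma\to 0}(S_1+S_2))$ and $\lim_{\delta\to -\beta}(\lim_{\gamma\to 0}(S_5+S_6))$ containing products of the form $\zeta'(1-\beta)\zeta'(1+\beta)$ and $\zeta''(1\pm\beta)\zeta(1\mp\beta)$. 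Although this cancellation is automatic from holomorphicity, I would verify it explicitly in order to track the powers of $\log q$ that it produces, carrying the Laurent expansion of $\zeta$ to sufficient order.

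To put the surviving constant term in the form claimed in \eqref{twistedfourthmoment}, I would expand each factor via the generalized product rule
\[
f^{(n)}(x)=n!\sum_{\alpha_1+\cdots+\alpha_k=n}\prod_{j=1}^k \frac{h_j^{(\alpha_j)}(x)}{\alpha_j!},
\]
and observe from \eqref{taudef} that differentiating any $\tau_{*,*,*,*}(\ell)$ up to fourth order at $\beta=0$ yields a sum over primes $p\mid\ell$ of terms of the shape $C(p)(\log p^\nu)^j$ with $C(p)\ll 1$ uniformly in $p$, while derivatives of the $X_*$-factors contribute powers of $\log q$ together with absolute constants built from $\gamma_0$, $\Gamma(1/4)$ and $\Gamma'(1/4)$. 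Grouping these contributions by the exponent $j_0$ of $\log q$ and by the exponents $(\log p_i^{\nu_i})^{j_i}$ coming from $\ell_1\ell_2$ produces the stated shape, with the prefactor $\tau_0(\ell_1)\tau_0(\ell_2)/(\ell_1^{1/2}\ell_2^{1/2}\zeta(2))$ arising by evaluating each $\tau_{*,*,*,*}(\ell_i)$ at $\beta=0$ (giving $\tau_0(\ell_i)$) together with the common $1/\zeta(2)$ from the denominator zeta factors at argument $2$, and the pure $\log q$ terms identified as $C_{j_0,0,0,0,0,i}(p_i)$ are exactly those for which every $\tau$-derivative is of order zero. The main obstacle is the explicit verification of the fourth-order pole cancellation in $\beta$; it is this bookkeeping, rather than any new analytic input, that forces both the logarithmic-polynomial structure and the uniform bound $C_{j_0,\ldots,j_4,i}(p_i)\ll 1$.
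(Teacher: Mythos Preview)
Your proposal is correct and follows essentially the same route as the paper: the argument in Section~2 preceding the proposition statement takes the limits in the identical order $\alpha=0$, $\gamma\to 0$, $\delta\to -\beta$, $\beta\to 0$, uses the same pairing $S_3+S_4$ (regular), $S_1+S_2$ and $S_5+S_6$ (simple poles related by the substitution $\beta\mapsto -\delta$, $\delta\mapsto -\beta$ and the factor $X_{\beta,\delta}$), and extracts the final shape via the same product-rule bookkeeping on the $\tau_{*,*,*,*}(\ell)$ factors together with $\lim_{\beta\to 0}\tau_{*,*,*,*}(\ell)=\tau_0(\ell)$. There is nothing substantively different to compare.
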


\section{Proofs of Theorem \ref{thmupperbound}}
\label{sec 2}

\subsection{Initial Treatments}
\label{sec 2'}

Observe first that we have $\phis(q)=q-2$ for a prime $q$. As the case $0 \leq k \leq 1$ has been established in \cite[Theorem 1.2]{Gao2024} and the case $k=2$ is a consequence of Proposition \ref{twisted_fourth_moment_theorem on the central point} by setting $\ell_1=\ell_2$ in \eqref{twistedfourthmoment}, we shall assume that $1 < k < 2$ throughout the proof.  We define a sequence of even natural numbers $\{ \ell_j \}_{1 \leq j \leq R}$ such that $\ell_1= 2\lceil N \log \log q\rceil$ and $\ell_{j+1} = 2 \lceil N \log \ell_j \rceil$ for $j \geq 1$, where $N, M$ are two large natural numbers depending on $k$ only and $R$ denotes the largest natural number satisfying $\ell_R >10^M$. \newline
  
Write ${ P}_1$ the set of odd primes not exceeding $q^{1/\ell_1^2}$ and ${ P_j}$ the set of primes lying in the interval $(q^{1/\ell_{j-1}^2}, q^{1/\ell_j^2}]$ for $2\le j\le R$.  Further, for each $1 \leq j \leq R$, set
\begin{equation}
\label{defPQ}
{\mathcal P}_j(\chi) = \sum_{p\in P_j} \frac{\chi(p)}{\sqrt{p}}  \quad \mbox{and} \quad  {\mathcal Q}_j(\chi, k) =\Big (\frac{12 k^2 {\mathcal
P}_j(\chi) }{\ell_j}\Big)^{r_k\ell_j},
\end{equation}
where $r_k=\lceil k /(2k-1) \rceil+1$. We also define ${\mathcal Q}_{R+1}(\chi, k)=1$. \newline

  Moreover, for each $1 \leq j \leq R$ and any real number $\alpha$, we define
\begin{align}
\label{defN}
{\mathcal N}_j(\chi, \alpha) = E_{\ell_j} (\alpha {\mathcal P}_j(\chi)), \quad \mathcal{N}(\chi, \alpha) = \prod_{j=1}^{R} {\mathcal
N}_j(\chi,\alpha),
\end{align}
   where for any non-negative integer $\ell$ and any real number $x$,
\begin{equation*}
E_{\ell}(x) = \sum_{j=0}^{\ell} \frac{x^{j}}{j!}.
\end{equation*}
 We shall follow the convention throughout the paper that an empty product equals $1$. \newline

   We note that, as already pointed out in \cite{H&Sound}, the definition of ${\mathcal P}_j(\chi)$ is introduced so that the real part of the sum of them is
  used to approximate $\log |L(1/2, f \otimes \chi)|$, and the definition of ${\mathcal N}_j(\chi, \alpha)$ is introduced so that it approximates
  $\exp(\alpha{\mathcal P}_j(\chi))$. Thus $\mathcal{N}(\chi, \alpha)$ can be viewed as an approximation to $L(1/2, f \otimes \chi)^{\alpha}$.

  We now apply the upper bounds principle of M. Radziwi{\l\l} and K. Soundararajan in \cite{Radziwill&Sound} as in the proof of \cite[Lemma 3.2]{Gao2024} to prove the following analogue of \cite[Lemma 3.2]{Gao2024}.
\begin{lemma}
\label{lem2}
 With the notation as above, we have, for $1<k < 2$,
\begin{align}
\label{basiclowerbound2}
\begin{split}
\sumstar_{\substack{ \chi \shortmod q }}|L(\tfrac{1}{2},\chi)|^{2k} \ll \Big ( \sumstar_{\substack{ \chi \shortmod q }} & |L(\tfrac{1}{2},\chi)|^4 \sum^{R}_{v=0} \prod^v_{j=1}\Big ( |\mathcal{N}_j(\chi, k-2)|^2 \Big ) |{\mathcal
 Q}_{v+1}(\chi, k)|^{2}
 \Big)^{k/2} \\
 & \times \Big ( \sumstar_{\substack{ \chi \shortmod q }}  \sum^{R}_{v=0}  \Big (\prod^v_{j=1}|\mathcal{N}_j(\chi, k)|^2\Big )|{\mathcal
 Q}_{v+1}(\chi, k)|^{2} \Big)^{(2-k)/2},
\end{split}
\end{align}
  where the implied constant depends on $k$ alone.
\end{lemma}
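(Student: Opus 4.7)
The plan is to mimic the Radziwi{\l\l}--Soundararajan upper bounds principle employed in the proof of \cite[Lemma 3.2]{Gao2024}, but with the Cauchy--Schwarz step replaced by H\"older's inequality with conjugate exponents $2/k$ and $2/(2-k)$, which is the natural split for $1<k<2$. Writing $X(\chi)$ and $Y(\chi)$ for the two inner $v$-sums appearing on the right of \eqref{basiclowerbound2}, H\"older reduces the lemma to establishing the uniform pointwise inequality $1\ll X(\chi)^{k/2}Y(\chi)^{(2-k)/2}$: once that is in hand,
\begin{align*}
\sumstar_{\chi\shortmod q}|L(\hhalf,\chi)|^{2k}
&\ll \sumstar_{\chi\shortmod q}\bigl(|L(\hhalf,\chi)|^{4}X(\chi)\bigr)^{k/2}\,Y(\chi)^{(2-k)/2}\\
&\le \Bigl(\sumstar_{\chi\shortmod q}|L(\hhalf,\chi)|^{4}X(\chi)\Bigr)^{k/2}\Bigl(\sumstar_{\chi\shortmod q}Y(\chi)\Bigr)^{(2-k)/2},
\end{align*}
which is exactly \eqref{basiclowerbound2}.

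To produce the pointwise bound we partition the primitive characters as $\bigsqcup_{v=0}^{R}\mathcal{S}_v$, where $\mathcal{S}_v$ is the set of $\chi$ with $|\mathcal{P}_j(\chi)|\le\ell_j/(12k^2)$ for every $1\le j\le v$ and, if $v<R$, also $|\mathcal{P}_{v+1}(\chi)|>\ell_{v+1}/(12k^2)$. On $\mathcal{S}_v$ with $v<R$ this latter inequality immediately gives $|\mathcal{Q}_{v+1}(\chi,k)|^{2}>1$ from the definition of $\mathcal{Q}_{v+1}$, while on $\mathcal{S}_R$ we have $|\mathcal{Q}_{R+1}|^{2}=1$ by convention. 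For $j\le v$ and $\alpha\in\{k-2,k\}$, one checks $|\alpha\mathcal{P}_j(\chi)|\le \ell_j/(6k^2)<\ell_j/6$, which is well within the range where the truncated exponential $E_{\ell_j}$ approximates $\exp$ to within an error that is exponentially small in $\ell_j$; a routine Taylor-remainder estimate (of the kind used in \cite{Gao2024,Radziwill&Sound}) then gives $|\mathcal{N}_j(\chi,\alpha)|\asymp \exp(\alpha\,\mathrm{Re}\,\mathcal{P}_j(\chi))$ with absolute implied constants. Combined with the elementary identity $k(k-2)+(2-k)k=0$, this yields the key pointwise relation
\begin{equation*}
|\mathcal{Q}_{v+1}(\chi,k)|^{2}\prod_{j=1}^{v}|\mathcal{N}_j(\chi,k-2)|^{k}|\mathcal{N}_j(\chi,k)|^{2-k}\gg 1,\qquad \chi\in\mathcal{S}_v.
\end{equation*}

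Because $\tfrac{k}{2}+\tfrac{2-k}{2}=1$, the left-hand side of the above factors as
\begin{equation*}
\Bigl(|\mathcal{Q}_{v+1}(\chi,k)|^{2}\prod_{j=1}^{v}|\mathcal{N}_j(\chi,k-2)|^{2}\Bigr)^{k/2}\Bigl(|\mathcal{Q}_{v+1}(\chi,k)|^{2}\prod_{j=1}^{v}|\mathcal{N}_j(\chi,k)|^{2}\Bigr)^{(2-k)/2}.
\end{equation*}
Dropping the remaining non-negative terms from the $v$-sums defining $X(\chi)$ and $Y(\chi)$ shows that the two bracketed quantities are at most $X(\chi)$ and $Y(\chi)$ respectively, so we obtain the required pointwise lower bound $1\ll X(\chi)^{k/2}Y(\chi)^{(2-k)/2}$ uniformly in $\chi$. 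Combined with the H\"older step displayed above, this completes the proof.

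The only non-formal ingredient is the truncated-exponential comparison $|\mathcal{N}_j(\chi,\alpha)|\asymp e^{\alpha\,\mathrm{Re}\,\mathcal{P}_j(\chi)}$ on $\mathcal{S}_v$, and this is where the carefully chosen thresholds $\ell_j/(12k^2)$ and the exponent $r_k$ enter; its verification is essentially the analytic core of the Radziwi{\l\l}--Soundararajan principle. Everything else in the plan is formal: the algebraic identity $k(k-2)+k(2-k)=0$, the factorisation enabled by $k/2+(2-k)/2=1$, and H\"older's inequality. The main obstacle is therefore limited to adapting the Taylor-remainder estimate to the two shifts $\alpha=k-2$ and $\alpha=k$, which presents no difficulty since $|\alpha|$ stays bounded throughout the range $1<k<2$.
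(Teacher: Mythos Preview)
Your proposal is correct and follows essentially the same route as the paper's proof: partition the characters according to the first index $v$ at which $|\mathcal{P}_{v+1}(\chi)|$ exceeds a fixed fraction of $\ell_{v+1}$, use the truncated-exponential estimate to show that on each piece the product $\prod_{j\le v}|\mathcal{N}_j(\chi,k-2)|^{2k}|\mathcal{N}_j(\chi,k)|^{2(2-k)}\cdot|\mathcal{Q}_{v+1}|^{4}\gg 1$, and then apply H\"older with exponents $2/k$, $2/(2-k)$. The only cosmetic difference is that the paper uses the threshold $\ell_j/10$ (valid since $12k^2/10>1$ for $k>1$) and phrases the key estimate directly as $|\mathcal{N}_j(\chi,k-2)|^{2k}|\mathcal{N}_j(\chi,k)|^{2(2-k)}\ge 1+O(e^{-\ell_j})$, whereas you use the threshold $\ell_j/(12k^2)$ and pass through the intermediate relation $|\mathcal{N}_j(\chi,\alpha)|\asymp e^{\alpha\,\mathrm{Re}\,\mathcal{P}_j(\chi)}$; both lead to the same conclusion.
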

\begin{proof}
Using arguments similar to those in the proof of \cite[Lemma 3.2]{Gao2024}, we get that when $|{\mathcal P}_j(\chi)| \le \ell_j/10$, uniformly for all $j$,
\begin{align}
\label{prodNlowerbound}
 |\mathcal{N}_j(\chi, k-2)|^{2k}|\mathcal{N}_j(\chi, k)|^{2(2-k)} \geq 1+ O\big(e^{-\ell_j} \big ).
\end{align}
  
Suppose that there is an integer $0 \leq v \leq R-1$ such that $| \mathcal{P}_j (\chi) | \leq \ell_j/10$ for all $j \leq v$ and that $|
  \mathcal{P}_{v+1} (\chi) | > \ell_{v+1}/10$.  Then in this case, $|{\mathcal Q}_{v+1}(\chi, k)| \geq 1$. Together with \eqref{prodNlowerbound}, we get that
\begin{align*}
 \Big ( \prod^v_{j=1}|\mathcal{N}_j(\chi, k-2)|^{2k}|\mathcal{N}_j(\chi, k)|^{2(2-k)} \Big )|{\mathcal Q}_{v+1}(\chi, k)|^4 \gg 1.
\end{align*}

On the other hand, if $| \mathcal{P}_j (\chi) | \leq \ell_j/10$ for all $1 \leq j \leq R$, then \eqref{prodNlowerbound} holds for all $j$ and hence
\begin{align*}
 \prod^R_{j=1}|\mathcal{N}_j(\chi, k-2)|^{2k}|\mathcal{N}_j(\chi, k)|^{2(2-k)} \gg 1.
\end{align*}

  In both cases, we derive that
\begin{align*}
 \Big (\sum^R_{v=0}\Big ( \prod^v_{j=1}|\mathcal{N}_j(\chi, k-2)|^2 \Big )|{\mathcal Q}_{v+1}(\chi, k)|^2 \Big )^{k/2}\Big (\sum^R_{v=0}\Big ( \prod^v_{j=1}|\mathcal{N}_j(\chi, k)|^2\Big ) |{\mathcal Q}_{v+1}(\chi, k)|^2 \Big )^{(2-k)/2}  \gg 1.
\end{align*}
The above implies that
\begin{align*}
 \sumstar_{\substack{ \chi \shortmod q }} |L(\half, \chi)|^{2k}  \ll \sumstar_{\substack{ \chi \shortmod q }} |L(\half, \chi)|^{2k} \Big (\sum^R_{v=0} & \Big ( \prod^v_{j=0}|\mathcal{N}_j(\chi, k-2)|^2 \Big )|{\mathcal Q}_{v+1}(\chi, k)|^2 \Big )^{k/2} \\
& \times \Big (\sum^R_{v=0}\Big (  \prod^v_{j=1}|\mathcal{N}_j(\chi, k)|^2\Big ) |{\mathcal Q}_{v+1}(\chi, k)|^2 \Big )^{(2-k)/2}.
\end{align*}
Now \eqref{basiclowerbound2} emerges from applying H\"older's inequality to the above.  This completes the proof of the lemma.
\end{proof}

 It follows from Lemma \ref{lem2} that in order to establish Theorem \ref{thmupperbound}, it suffices to prove the following two propositions.
\begin{proposition}
\label{Prop5} With the notation as above, we have
\begin{align*}
\sumstar_{\substack{ \chi \shortmod q }}|L(\tfrac{1}{2},\chi)|^4 \sum^{R}_{v=0}\Big (\prod^v_{j=1}|\mathcal{N}_j(\chi, k-2)|^{2}\Big ) |{\mathcal
 Q}_{v+1}(\chi, k)|^2    \ll
\phis(q)(\log q)^{ k^2 }.
\end{align*}
\end{proposition}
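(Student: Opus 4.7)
The plan is to expand $\prod_{j=1}^v |\mathcal{N}_j(\chi,k-2)|^2 \cdot |\mathcal{Q}_{v+1}(\chi,k)|^2$ as a short Dirichlet polynomial, apply the twisted fourth moment of Proposition \ref{twisted_fourth_moment_theorem on the central point} termwise, and evaluate the resulting arithmetic sum via a Mertens-type Euler product analysis. Each $\mathcal{N}_j(\chi,k-2) = E_{\ell_j}((k-2)\mathcal{P}_j(\chi))$ is supported on integers $n$ with prime factors in $P_j \subseteq [1, q^{1/\ell_j^2}]$ and $\Omega(n) \leq \ell_j$, hence has length at most $q^{1/\ell_j}$; similarly $\mathcal{Q}_{v+1}(\chi,k)$ has length at most $q^{r_k/\ell_{v+1}}$. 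Multiplying out the moduli squared gives
\[
\prod_{j=1}^v |\mathcal{N}_j(\chi,k-2)|^2 |\mathcal{Q}_{v+1}(\chi,k)|^2 = \sum_{m_1,m_2} \frac{c_v(m_1,m_2)}{\sqrt{m_1 m_2}}\,\chi(m_1)\overline{\chi(m_2)},
\]
where the disjointness of the sets $P_j$ lets us extract common prime powers and reduce to coprime $(m_1,m_2)$ with $m_1 m_2 \leq q^{2\sum_{j=1}^v 1/\ell_j + 2 r_k/\ell_{v+1}} \leq q^\vartheta$ for some $\vartheta < 1/32$, provided $M$ is chosen large enough in terms of $k$ (so that $\ell_R \geq 10^M$ is sufficiently large).

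Next, I apply Proposition \ref{twisted_fourth_moment_theorem on the central point} termwise, treating the parity class $\chi(-1) = -1$ by the analogous formula. The error contribution sums to $O(\phis(q) q^{-\delta})$ for some $\delta > 0$, by crude divisor-type bounds on $c_v$ and the slack $\vartheta < 1/32$. For the main term, I sum against $c_v(m_1, m_2)/\sqrt{m_1 m_2}$: due to multiplicativity of the coefficients and of $\tau_0$, this reorganises into a polynomial in $\log q$ of degree $4$ whose coefficients are Euler products over primes in $\bigcup_{j=1}^{v+1} P_j$. At each $p \in P_j$ with $j \leq v$ the local factor is, up to an $O(e^{-\ell_j})$ truncation tail, a sum involving $(k-2)^{a+b}\tau_0(p^a)\tau_0(p^b)/(a!\,b!\,p^{a+b})$, and after combining the $(\log q)^{j_0}$ factors ($0 \leq j_0 \leq 4$) of Proposition \ref{twisted_fourth_moment_theorem on the central point} with the $\log p$-weighted arithmetic coefficients and applying partial summation over the primes, the total reassembles into $\phis(q)(\log q)^{k^2}$ times a bounded Euler-product factor, which gives the claimed bound for the typical term $v = R$.

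For $v < R$, the penalty factor $|\mathcal{Q}_{v+1}(\chi,k)|^2 = (12k^2/\ell_{v+1})^{2 r_k \ell_{v+1}} |\mathcal{P}_{v+1}(\chi)|^{2 r_k \ell_{v+1}}$ is controlled via the Gaussian moment bound $\sumstar_{\chi \shortmod q} |\mathcal{P}_{v+1}(\chi)|^{2 r_k \ell_{v+1}} \ll \phis(q)(r_k \ell_{v+1})!\bigl(\sum_{p \in P_{v+1}} 1/p\bigr)^{r_k \ell_{v+1}}$; Stirling's formula, the estimate $\sum_{p \in P_{v+1}} 1/p \ll \log(\ell_v/\ell_{v+1}) \ll 1$, and the choice $r_k = \lceil k/(2k-1)\rceil + 1$ combine to make this geometrically small in $\ell_{v+1}$, so that the sum over $v < R$ is absorbed into the implied constant times the $v = R$ term. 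The main obstacle will be the Euler-product reassembly in Step 2: one must verify that, after combining the leading $j_0 = 4$ contribution with all the subleading $j_0 < 4$ pieces of Proposition \ref{twisted_fourth_moment_theorem on the central point}, the result agrees with the CFKRS-predicted $(\log q)^{k^2}$, rather than the naive $(\log q)^{4 + (k-2)^2} = (\log q)^{k^2 - 4k + 8}$ one would obtain by treating $|L(\tfrac12,\chi)|^4$ and $|\mathcal{N}_j(\chi,k-2)|^2$ as independent. This cancellation is the $L$-function analogue of the identity $e^{4\,\mathrm{Re}\,\mathcal{P}} \cdot e^{2(k-2)\,\mathrm{Re}\,\mathcal{P}} = e^{2k\,\mathrm{Re}\,\mathcal{P}}$, and its verification is precisely where the full fine structure of the twisted fourth moment in Proposition \ref{twisted_fourth_moment_theorem on the central point} must be exploited.
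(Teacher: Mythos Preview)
Your overall route coincides with the paper's: expand $\prod_{j\le v} |\mathcal{N}_j(\chi,k-2)|^2\,|\mathcal{Q}_{v+1}(\chi,k)|^2$ as a short Dirichlet polynomial in $\chi$, insert the twisted fourth moment of Proposition~\ref{twisted_fourth_moment_theorem on the central point}, and factor the resulting arithmetic sum as an Euler product over $\bigcup_{j\le v+1}P_j$. The local factor at $p\in P_j$ with $j\le v$ does indeed produce $1+(k^2-4)/p+O(1/p^2)$, and Mertens converts the $(\log q)^4$ from the fourth moment into $(\log q)^{k^2}$. One correction: your displayed local summand $(k-2)^{a+b}\tau_0(p^a)\tau_0(p^b)/(a!\,b!\,p^{a+b})$ omits the factor $p^{\min(a,b)}$ coming from the $\gcd$ in the main term of Proposition~\ref{twisted_fourth_moment_theorem on the central point}; with that factor the $(a,b)=(1,1)$ contribution is $(k-2)^2/p$ rather than $O(1/p^2)$, and this is exactly what turns $4(k-2)$ into $k^2-4$.

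The genuine gap is your handling of $\mathcal{Q}_{v+1}$ for $v<R$. You invoke the Gaussian moment bound on $\sumstar_{\chi}|\mathcal{P}_{v+1}(\chi)|^{2r_k\ell_{v+1}}$, but that is a bound on a bare character sum; in the quantity at hand $|\mathcal{Q}_{v+1}(\chi,k)|^2$ sits next to $|L(\tfrac12,\chi)|^4\prod_{j\le v}|\mathcal{N}_j|^2$ and cannot be decoupled---any H\"older step would require an eighth or higher moment of $L$, which is unavailable. What the paper does instead is keep $|\mathcal{Q}_{v+1}|^2$ inside the Dirichlet-polynomial expansion (as in your first paragraph), apply the fourth moment, and then bound the $P_{v+1}$-part of the resulting \emph{arithmetic} Euler product by Rankin's trick: insert $(12r_k)^{\Omega(n_{v+1})+\Omega(n'_{v+1})-2r_k\ell_{v+1}}\ge 1$, drop the constraint $\Omega = r_k\ell_{v+1}$, and combine the resulting free sum with the prefactor $(12/\ell_{v+1})^{2r_k\ell_{v+1}}((r_k\ell_{v+1})!)^2$ and Stirling to obtain the decay $e^{-\ell_{v+1}}$. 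The numerics match your Gaussian-moment heuristic (both are diagonal computations), so your intuition is correct, but the argument as you have phrased it does not go through.
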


\begin{proposition}
\label{Prop6} With the notation as above, we have
\begin{align*}
 \sumstar_{\substack{ \chi \shortmod q }} \sum^{R}_{v=0} \Big ( \prod^v_{j=1}|\mathcal{N}_j(\chi, k)|^{2}\Big )|{\mathcal
 Q}_{v+1}(\chi, k)|^2   \ll \phis(q)(\log q)^{ k^2 }.
\end{align*}
\end{proposition}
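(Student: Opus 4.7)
My plan is to open the square, apply orthogonality of characters modulo $q$, observe that the Dirichlet polynomial inside is so short that the character sum collapses to its pure diagonal, and finally evaluate this diagonal by factoring over the disjoint prime groups $P_j$.

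Fix $0 \le v \le R$ and set $F_v(\chi) := \prod_{j=1}^{v}\mathcal{N}_j(\chi,k)\,\mathcal{Q}_{v+1}(\chi,k)$. Each $\mathcal{N}_j(\chi,k)$ is a Dirichlet polynomial supported on integers composed of at most $\ell_j$ primes (with multiplicity) from $P_j$, while $\mathcal{Q}_{v+1}(\chi,k)$ is supported on integers composed of exactly $r_k \ell_{v+1}$ primes from $P_{v+1}$. Since every $p \in P_j$ satisfies $p \le q^{1/\ell_j^2}$, the support of $F_v$ lies in integers $n \le q^{\sum_{j=1}^v 1/\ell_j + r_k/\ell_{v+1}}$, which is at most $q^{C_k/\ell_R}$ by the rapid backward growth of $\{\ell_j\}$. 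Hence once $M$ is sufficiently large in terms of $k$ this is $\le q^{1/2}$, and orthogonality of all characters modulo $q$ then yields
\begin{align*}
\sumstar_{\chi \shortmod q}|F_v(\chi)|^2 \;\le\; \sum_{\chi \shortmod q}|F_v(\chi)|^2 \;=\; \phi(q)\,D(F_v) \;\ll\; \phis(q)\,D(F_v),
\end{align*}
where $D(F_v)$ denotes the sum of squared absolute values of the Dirichlet coefficients of $F_v$. Disjointness of the $P_j$ factors this diagonal as $D(F_v) = \bigl(\prod_{j=1}^v D(\mathcal{N}_j(\cdot,k))\bigr)\cdot D(\mathcal{Q}_{v+1}(\cdot,k))$.

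The two factor types are then controlled by elementary multinomial estimates. Expanding $\mathcal{N}_j(\chi,k) = \sum_{m\le \ell_j}k^m\mathcal{P}_j(\chi)^m/m!$ and using the bound $\prod_i (1/a_i!)^2 \le \binom{m}{a_1,\ldots,a_s}/m!$ gives
\begin{align*}
D(\mathcal{N}_j(\cdot,k)) \;\le\; \sum_{m=0}^{\ell_j}\frac{k^{2m}}{m!}\Bigl(\sum_{p\in P_j}\frac{1}{p}\Bigr)^m \;\le\; \exp\Bigl(k^2\sum_{p\in P_j}\frac{1}{p}\Bigr),
\end{align*}
and analogously
\begin{align*}
D(\mathcal{Q}_{v+1}(\cdot,k)) \;\le\; \Bigl(\frac{12k^2}{\ell_{v+1}}\Bigr)^{2r_k\ell_{v+1}}(r_k\ell_{v+1})!\,\Bigl(\sum_{p\in P_{v+1}}\frac{1}{p}\Bigr)^{r_k\ell_{v+1}}.
\end{align*}
By Mertens' theorem and the recursion $\ell_{v+1}\ge 2N\log\ell_v$ we have $\sum_{p\in P_{v+1}}1/p \le \ell_{v+1}/N + O(1)$, and Stirling's formula then reduces the estimate to $D(\mathcal{Q}_{v+1}(\cdot,k)) \le \bigl(C_k/N\bigr)^{r_k\ell_{v+1}} \le e^{-\ell_{v+1}}$ once $N$ is taken large enough in terms of $k$.

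Combining these with the telescoping Mertens estimate $\sum_{j=1}^v\sum_{p\in P_j}1/p = \log\log q - 2\log\ell_v + O(1)$ yields, for $1 \le v \le R-1$,
\begin{align*}
\sumstar_{\chi \shortmod q}|F_v(\chi)|^2 \;\ll\; \phis(q)(\log q)^{k^2}\,\ell_v^{-2k^2}\,e^{-\ell_{v+1}},
\end{align*}
with an analogous (smaller) bound for $v=0$ (where the $\mathcal{N}_j$-product is empty) and the main-term bound $\ll \phis(q)(\log q)^{k^2}$ in the boundary case $v=R$ (where $\mathcal{Q}_{R+1}=1$). Summing over $v$ gives the claim. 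The main obstacle is extracting the exponential decay of $D(\mathcal{Q}_{v+1})$: the constants $12k^2$ in the definition of $\mathcal{Q}_{v+1}$ and the exponent $r_k = \lceil k/(2k-1)\rceil + 1$ are calibrated precisely so that after Stirling and Mertens the factor $(C_k/N)^{r_k\ell_{v+1}}$ beats $1$ upon taking $N$ large, yielding an honest exponential decay in $\ell_{v+1}$ that dominates the polynomial loss $\ell_v^{-2k^2}$ and makes the sum over $v$ converge, with main contribution from $v=R$.
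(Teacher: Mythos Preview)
Your argument is correct and is the standard way to prove such a statement: orthogonality reduces to the pure diagonal because the Dirichlet polynomial is shorter than $q^{1/2}$, the diagonal factors over the disjoint prime ranges $P_j$, and Mertens plus Stirling give the exponential saving from the $\mathcal{Q}_{v+1}$ factor. The paper itself does not prove this proposition at all but simply cites \cite[Proposition~3.5]{Gao2024}; your write-up is essentially a reconstruction of that cited proof, so there is no genuine divergence in method.
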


   Note that Proposition \ref{Prop6} is contained in \cite[Proposition 3.5]{Gao2024}.  It therefore remains to prove Proposition \ref{Prop5} in the sequel.

\subsection{Proof of Proposition \ref{Prop5}}
\label{sec 5}

   We write that
\begin{align}
\label{Pexpression}
  {\mathcal P}_{v+1}(\chi)^{r_k\ell_{v+1}} =&  \sum_{ \substack{ n_{v+1}}} \frac{1}{\sqrt{n_{v+1}}}\frac{(r_k\ell_{v+1})!
  }{w(n_{v+1})}\chi(n_{v+1})p_{v+1}(n_{v+1}),
\end{align}
  where $w(n)$ is defined to be the multiplicative function satisfying $w(p^{\alpha}) = \alpha!$ for prime powers $p^{\alpha}$ and $p_{v+1}(n) \in \{ 0,1 \}$ with $p_{v+1}(n)=1$ if and only if $n$ is composed of exactly $r_k\ell_{v+1}$ primes (counted with multiplicity), all from the interval $P_{v+1}$. \newline
  
     We then proceed as in the proof of \cite[Proposition 3.4]{Gao2024} to see that we may write for simplicity, 
\begin{align}
\label{ProdNQ}
\begin{split} \Big (\prod^v_{j=1}|\mathcal{N}_j(\chi, k-2)|^2 \Big )|{\mathcal
 Q}_{v+1}(\chi, k)|^{2} = \Big( \frac{12  }{\ell_{v+1}}\Big)^{2r_k\ell_{v+1}}((r_k\ell_{v+1})!)^2 \sum_{a,b \leq q^{2r_k/10^{M}}} \frac{u_a u_b}{\sqrt{ab}}\chi(a)\overline{\chi}(b).
 \end{split}
\end{align}
Here the coefficients $u_a$ and $u_b$ emerge from the direct expansion of the left-hand side of \eqref{ProdNQ} using \eqref{defPQ}, \eqref{defN} and \eqref{Pexpression}.  An important property to keep in mind is that $u_a, u_b \leq 1$ for all $a, b$. Here we note that by \cite[(5.3)]{Gao2024}, 
\begin{align}
\label{factorbound}
 & \Big( \frac{12 }{\ell_{v+1}} \Big)^{2r_k\ell_{v+1}}((r_k \ell_{v+1})!)^2
\leq (r_k\ell_{v+1})^2 \Big( \frac{12 r_k }{e } \Big)^{2r_k\ell_{v+1}}.
\end{align}

Note further that 
\begin{align*}
&\sumstar_{\substack{ \chi \shortmod q }}|L(\tfrac{1}{2},\chi)|^4 \sum^{R}_{v=0}\Big (\prod^v_{j=1}|\mathcal{N}_j(\chi, k-2)|^{2}\Big ) |{\mathcal
 Q}_{v+1}(\chi, k)|^2  =\sum^{R}_{v=0}S_{\pm},
\end{align*}  
  where
\begin{align*}
     S_{\pm} = & \sumstar_{\substack{ \chi \shortmod q \\ \chi(-1)=\pm 1}}|L(\tfrac{1}{2},\chi)|^4 \Big (\prod^v_{j=1}|\mathcal{N}_j(\chi, k-2)|^{2}\Big ) |{\mathcal
 Q}_{v+1}(\chi, k)|^2. 
\end{align*}  
  As the treatments are similar, we will focus on $S_+$ in what follows.  From \eqref{ProdNQ},
\begin{align} \label{LNsquaresum}
S_+ =  \Big( \frac{12  }{\ell_{v+1}}\Big)^{2r_k\ell_{v+1}}((r_k\ell_{v+1})!)^2 \sum_{a,b \leq q^{2r_k/10^{M}}} \frac{u_a u_b}{\sqrt{ab}} 
\sumstar_{\substack{ \chi \shortmod q \\ \chi(-1)=1}}  |L( \tfrac12, \chi)|^4 \chi(a) \overline{\chi}(b).
\end{align}

We take $M$ large enough so that $a, b \leq q^{2r_k/10^{M}}<q^{\vartheta}<q$.  Next note that $\chi(a) \overline{\chi}(b)=\chi(a/(a,b)) \overline{\chi}(b/(a,b))$, where $(a,b)$ denotes the greatest common divisor of $a$ and $b$.  Since $(a/(a,b), b/(a,b))=1$, we are in the position to apply Proposition  \ref{twisted_fourth_moment_theorem on the central point} to evaluate the last sum in \eqref{LNsquaresum}. As shown in the proof of \cite[Proposition 3.4]{Gao2024}, the contribution from the $O$-term in \eqref{twistedfourthmoment} is negligible because of the power saving, upon taking $M$ large enough. Thus, we may concentrate on the main term \eqref{twistedfourthmoment}.  Using \eqref{factorbound}, 
\begin{align}
\label{LNsquaresum1}
\begin{split}
S_+ \ll  \phis(q) (r_k\ell_{v+1})^2 \Big( \frac{12 r_k }{e } \Big)^{2r_k\ell_{v+1}} & \sum_{a,b \leq q^{2r_k/10^{M}}} \frac{(a,b)\tau_{0}(\frac a{(a,b)})\tau_{ 0}(\frac b{(a,b)})}{ab} u_a u_b \\
& \times \Big (\sum_{\substack{0 \leq j_0+j_1+\ldots+j_4 \leq 4 \\ j_0 \geq 0, j_1 \geq 0, \ldots, j_4 \geq 0}}(\log q)^{j_0}
\sum_{\substack{ 1 \leq i \leq 4 \\ p^{\nu_i}_i \| \frac {ab}{(a,b)^2}, \nu_i \geq 1 \\ p_i \text{distinct} }}  C_{j_0, \ldots, j_4,i}(p_i)(\log p^{\nu_i}_i)^{j_i}\Big ).
\end{split}
\end{align}

As the estimations of the complementary are similar, it suffices to consider the following subsum of the right-hand side of \eqref{LNsquaresum1} given by
\begin{align}
\label{sumoverlog}
\begin{split}
\phis(q) & (\log q)^3(r_k\ell_{v+1})^2 \Big( \frac{12 r_k }{e } \Big)^{2r_k\ell_{v+1}} \sum_{a,b }  \sum_{p^{l_1} \| a} \frac{(a,b)\tau_{0}(\frac a{(a,b)})\tau_{ 0}(\frac b{(a,b)})}{ab} u_a u_b \log p^{l_1} \\
=& \phis(q)(\log q)^3(r_k\ell_{v+1})^2 \Big( \frac{12 r_k }{e } \Big)^{2r_k\ell_{v+1}} \sum_{p \in \bigcup^{v+1}_{j=1} P_j}\sum_{l_1 \geq 1, l_2 \geq 0}\frac {l_1 \log p\tau_{0}(p^{l_1-\min (l_1, l_2)})\tau_{0}(p^{l_2-\min (l_1, l_2)})}{p^{l_1+l_2-\min (l_1, l_2)}}\frac {(k-2)^{l_1+l_2}}{l_1!l_2!}\\
& \hspace*{2cm} \times \sum_{\substack{ a,b \\ (ab, p)=1} } \frac{(a,b) \tau_{0}(\frac a{(a,b)})\tau_{ 0}(\frac b{(a,b)}) u_{p^{l_1}a} u_{p^{l_2}b}}{ab} .
\end{split}
\end{align}
   We now treat the last sum above for fixed $p=p_1, l_1, l_2$. Without loss of generality, we may assume that $p=p_1 \in P_1$.  Let $\Omega(n)$ denote the number of distinct prime powers dividing $n$.  We define for $(n_1n'_1, p_1)=1$,
\begin{equation*}
 v_{n_1} = \frac{1}{n_1} \frac{(k-2)^{\Omega(n_1)}}{w(n_1)}  b_1(n_1p_1^{l_1}), \quad v_{n'_1} = \frac{1}{n'_1} \frac{(k-2)^{\Omega(n'_1)}}{w(n'_1)}  b_1(n'_1p_1^{l_2}).
\end{equation*}
  Also, for $2 \leq j \leq v$,
\begin{equation*}
 v_{n_j} = \frac{1}{n_j} \frac{(k-2)^{\Omega(n_j)}}{w(n_j)}  b_j(n_j) \quad \mbox{and} \quad v_{n'_j} = \frac{1}{n'_j} \frac{(k-2)^{\Omega(n'_j)}}{w(n'_j)}  b_j(n'_j)
\end{equation*}
and
\begin{equation*}
 v_{n_{v+1}} = \frac{1}{n_{v+1}} \frac{1}{w(n_{v+1})}  p_{v+1}(n_{v+1}), \quad  v_{n'_{v+1}} = \frac{1}{n'_{v+1}} \frac{1}{w(n'_{v+1})}  p_{v+1}(n'_{v+1}).
\end{equation*}
Thus, using these notations,
\begin{align}
\label{doublesum}
 \sum_{\substack{ a,b \\ (ab, p_1)=1} } \frac{(a,b)\tau_{0}(\frac a{(a,b)})\tau_{ 0}(\frac b{(a,b)}) u_{p^{l_1}a} u_{p^{l_2}b}}{ab}=\prod^{v+1}_{j=1}\Big ( \sum_{\substack{n_j, n'_j \\ (n_1n'_1, p_1)=1}}(n_j, n'_j)\tau_{0}(\frac {n_j}{(n_j, n'_j)})\tau_{ 0}(\frac {n'_j}{(n_j, n'_j)})v_{n_j}v_{n'_j} \Big ).
\end{align}

   As in the proof of \cite[Proposition 3.4]{Gao2024}, when $\max (l_1, l_2) \leq \ell_1/2$, we remove the restriction of $b_1(n_1)$ on $\Omega_1(n_1)$ and $b_1(n'_1)$ on $\Omega_1(n'_1)$ to deduce that the last sum over $n_1, n'_1$ in \eqref{doublesum} becomes
\begin{align*}
  \sum_{\substack{n_1, n'_1\\ (n_1n'_1, p_1)=1}}(n_1, n'_1) & \tau_{0}\Big(\frac {n_1}{(n_1, n'_1)}\Big)\tau_{ 0} \Big(\frac {n'_1}{(n_1, n'_1)}\Big)\frac{1}{n_1} \frac{(k-2)^{\Omega(n_1)}}{w(n_1)} \frac{1}{n'_1} \frac{(k-2)^{\Omega(n'_1)}}{w(n'_1)} \\
   & =
  \prod_{\substack{p \in P_1 \\ p \neq p_1}}\Big ( 1+\frac{4(k-2)+(k-2)^2}{p} +O\Big(\frac 1{p^2}\Big)  \Big ) \ll \exp \Big(\sum_{p \in P_1}\frac{k^2-4}{p}+O\Big(\sum_{p \in P_1}\frac 1{p^2}\Big)\Big),
\end{align*}
  where the last bound above follows by noting that $\tau_0$ is a multiplicative function satisfying, 
\begin{align*}
  \tau_0(p^{\nu})=1+\nu+O\Big(\frac 1p\Big), \quad \mbox{for any integer} \; \nu \geq 0.
\end{align*}  

  Meanwhile, we observe that $2^{\Omega(n)-\ell_1/2}\ge 1$ if $\Omega(n)+\max (l_1, l_2) \geq \ell_1$.  Thus, applying Rankin's trick reveals that the error incurred in the afore-mentioned removal does not exceed
\begin{equation} \label{removalerror}
\begin{split}
  2^{-\ell_1/2} \sum_{n_1, n'_1}\frac {(n_1, n'_1)}{n_1n'_1}\tau_{0} & \Big(\frac {n_1}{(n_1, n'_1)}\Big)\tau_{ 0}\Big(\frac {n'_1}{(n_1, n'_1)}\Big) \frac{(2-k)^{\Omega(n_1)}}{w(n_1)} \frac{(2-k)^{\Omega(n'_1)}2^{\Omega(n'_1)}}{w(n'_1)} \\
 =& 2^{-\ell_1/2} \prod_{p \in P_1}\Big ( 1+\frac{6(2-k)+2(2-k)^2}{p} +O\Big(\frac 1{p^2}\Big)  \Big ).
 \end{split}
\end{equation}
   As in \cite[(4.5)]{Gao2024}, upon taking $N$ large enough,
\begin{align}
\label{boundsforsumoverp}
   \sum_{p \in P_j}\frac 1{p} \leq \frac 1N \ell_j.
\end{align}   
From the above, it follows that when $\max (l_1, l_2) \leq \ell_1/2$, \eqref{removalerror} is
\begin{align*}
 \ll &  2^{-\ell_1/4} \exp \Big(\sum_{p \in P_1}\frac{k^2-4}{p}+O\Big(\sum_{p \in P_1}\frac 1{p^2} \Big)\Big).
\end{align*}

   On the other hand, \eqref{boundsforsumoverp} again leads to, when $\max (l_1, l_2) > \ell_1/2$, 
\begin{align*}
  \sum_{\substack{n_1, n'_1 \\ (n_1n'_1, p_1)=1}} & (n_1, n'_1)\tau_{0}\Big(\frac {n_1}{(n_1, n'_1)}\Big)\tau_{ 0}\Big(\frac {n'_1}{(n_1, n'_1)}\Big) v_{n_1}v_{n'_1} \\
 \ll &  \sum_{\substack{n_1, n'_1 \\ (n_1n'_1, p_1)=1}}\frac{(n_1, n'_1)}{n_1n'_1}\tau_{0}\Big(\frac {n_1}{(n_1, n'_1)}\Big)\tau_{ 0}\Big(\frac {n'_1}{(n_1, n'_1)}\Big) 
 \frac{(2-k)^{\Omega(n_1)}}{w(n_1)}  \frac{(2-k)^{\Omega(n'_1)}}{w(n'_1)} \\
 \ll & 2^{\ell_1/6}  \exp \Big(\sum_{p \in P_1}\frac{k^2-3}{p}+O\Big(\sum_{p \in P_1}\frac
 1{p^2}\Big)\Big).
\end{align*}
   We then derive that in this case (noting that $(k-2)^{l_1+l_2} \ll 1$ for $1 < k<2$)
\begin{align*}
 &  \frac {l_1 \log p_1\tau_{0}(p^{l_1-\min (l_1, l_2)})\tau_{0}(p^{l_2-\min (l_1, l_2)})}{p^{l_1+l_2-\min (l_1, l_2)}}\frac {(k-2)^{l_1+l_2}}{l_1!l_2!}\sum_{\substack{n_1, n'_1 \\ (n_1n'_1, p_1)=1}}(n_1, n'_1)\tau_{0}\Big(\frac {n_1}{(n_1, n'_1)}\Big)\tau_{ 0}\Big(\frac {n'_1}{(n_1, n'_1)}\Big) v_{n_1}v_{n'_1} \\
& \hspace*{1cm} \ll  \frac {l_1 \log p_1 \tau_{0}(p^{l_1-\min (l_1, l_2)})\tau_{0}(p^{l_2-\min (l_1, l_2)})}{p^{\max (l_1, l_2)}_1} \frac {1}{l_1!l_2!} 2^{\ell_1/6}  \exp \Big(\sum_{p \in P_1}\frac{k^2-4}{p}+O\Big(\sum_{p \in P_1}\frac 1{p^2}\Big)\Big) \\
 & \hspace*{1cm} \ll \frac {l_1 (|l_2-l_1|+1) \log p_1}{p^{l_1/2+\max (l_1, l_2)/2}_1} \frac {1}{l_1!l_2!} 2^{\ell_1/6} \exp \Big(\sum_{p \in P_1}\frac{k^2-4}{p}+O\Big(\sum_{p \in
 P_1}\frac 1{p^2}\Big)\Big) \\
 & \hspace*{1cm} \ll \frac {l_1 \log p_1}{p^{l_1/2}_1}\frac {1}{l_1!l_2!} 2^{-\ell_1/20}   \exp \Big(\sum_{p \in  P_1}\frac{k^2-4}{p_1}+O\Big(\sum_{p \in  P_1}\frac 1{p^2}\Big)\Big).
\end{align*}

   Similar estimations apply to the sums over $n_j, n'_j$ for $2 \leq j \leq v$ in \eqref{doublesum}.  We then apply Rankin's trick again to deal with the sum over $n_{v+1}$, $n'_{v+1}$.  It is
\begin{align*}
 \ll &  (12r_k)^{-2r_k\ell_{v+1}} \sum_{\substack{n_{v+1}, n'_{v+1} \\ p |
n_{v+1}n'_{v+1} \implies  p\in P_{v+1} } }\frac {(n_{v+1}, n'_{v+1})}{n_{v+1}n'_{v+1}}\tau_{0}\Big(\frac {n_{v+1}}{(n_{v+1}, n'_{v+1})}\Big)\tau_{ 0}\Big(\frac {n'_{v+1}}{(n_{v+1}, n'_{v+1})}\Big) \frac{(12r_k)^{\Omega(n_{v+1})}}{w(n_{v+1})} \frac{(12r_k)^{\Omega(n'_{v+1})}}{w(n'_{v+1})}.
\end{align*}
   We take $N$ large enough to see that the above implies that
\begin{align*}
\begin{split}
  (r_k\ell_{v+1})^2 \Big( \frac{12 r_k }{e } \Big)^{2r_k\ell_{v+1}} & \sum_{\substack{n_{v+1}, n'_{v+1} \\ (n_{v+1}n'_{v+1}, p_1)=1}}(n_{v+1}, n'_{v+1})\tau_{0}\Big(\frac {n_{v+1}}{(n_{v+1}, n'_{v+1})}\Big)\tau_{ 0}\Big(\frac {n'_{v+1}}{(n_{v+1}, n'_{v+1})}\Big)v_{n_{v+1}}v_{n'_{v+1}} \\
\ll &  e^{-\ell_{v+1}}\exp \Big(\sum_{p \in P_{v+1}}\frac{k^2-4}{p}+O\Big(\sum_{p \in P_{v+1}}\frac 1{p^2}\Big)\Big).
\end{split}
\end{align*}

  We thus deduce from the above discussions that 
\begin{align}
\label{sumpbound}
\begin{split}
 (r_k\ell_{v+1})^2 & \Big( \frac{12 r_k }{e } \Big)^{2r_k\ell_{v+1}} \frac {l_1 \log p_1\tau_{0}(p^{l_1-\min (l_1, l_2)}_1)\tau_{0}(p^{l_2-\min (l_1, l_2)}_1)}{p^{l_1+l_2-\min (l_1, l_2)}_1}\frac {(k-2)^{l_1+l_2}}{l_1!l_2!}\\
& \hspace*{2cm} \times \sum_{\substack{ a,b \\ (ab, p_1)=1} } \frac{(a,b) \tau_{0}(\frac a{(a,b)})\tau_{ 0}(\frac b{(a,b)}) u_{p^{l_1}_1a} u_{p^{l_2}_1b}}{ab} \\
 \ll & e^{-\ell_{v+1}}\prod^v_{j=1}\big(1+O(2^{-\ell_j/20})\big)\exp \Big(\sum_{p \in \bigcup^{v+1}_{j=1} P_j}\frac{k^2-4}{p}+O\Big(\sum_{p \in \bigcup^{v+1}_{j=1} P_j}\frac 1{p^2}\Big)\Big) \Big ( \frac{\log p_1}{p_1} + O\Big(\frac
 {\log p_1}{p^2_1}\Big) \Big ).
\end{split}
\end{align}

  By \eqref{boundsforsumoverp} and the observation $\ell_j > \ell^2_{j+1}>2\ell_{j+1}$, we obtain that
\begin{align}
\label{sumpbound1}
 \sum_{p \in \bigcup^R_{j=v+2} P_j}\frac{|k^2-4|}{p} \leq \sum^{R}_{j=v+2}\sum_{p \in P_j}\frac 1{p} \leq \frac 1N \sum^{R}_{j=v+2} \ell_j \leq \frac {\ell_{v+2}}{N} \sum^{\infty}_{j=0} \frac 1{2^j} \leq \frac {\ell_{v+1}}{N}.
\end{align}

Now \eqref{sumpbound1} renders that the last expression in \eqref{sumpbound} is
\begin{align}
\label{sumpbound2}
\begin{split}
 \ll &  e^{-\ell_{v+1}/2}\exp \Big(\sum_{p \in \bigcup^{R}_{j=1} P_j}\frac{k^2-4}{p}+O\Big(\sum_{p \in \bigcup^{R}_{j=1} P_j}\frac 1{p^2}\Big)\Big) \Big ( \frac{\log p_1}{p_1} + O\Big(\frac
 {\log p_1}{p^2_1}\Big) \Big ).
\end{split}
\end{align}

From \cite[Theorem 2.7]{MVa1}, we have for some $x \geq 2$ and some constant $b$,
\begin{align*}
\begin{split}
\sum_{p\le x} \frac{1}{p} = \log \log x + b+ O\Big(\frac{1}{\log x}\Big) \quad \mbox{and} \quad
\sum_{p\le x} \frac {\log p}{p} =  \log x + O(1).
\end{split}
\end{align*}

 We then conclude from this, \eqref{LNsquaresum1}, \eqref{sumoverlog} and \eqref{sumpbound2} that
\begin{align*}
\begin{split}
S_+ \ll &    \phis(q)(\log q)^3 e^{-\ell_{v+1}/2}\exp \Big(\sum_{p \in \bigcup^{R}_{j=1} P_j}\frac{k^2-4}{p}+O\Big(\sum_{p \in \bigcup^{R}_{j=1} P_j}\frac 1{p^2}\Big)\Big)  \sum_{p \in \bigcup^{v+1}_{j=1}P_j} \Big ( \frac{\log p}{p} + O\Big(\frac
 {\log p}{p^2}\Big) \Big ) \\
\ll & \phis(q) e^{-\ell_{v+1}/2}(\log q)^{k^2}.
\end{split}
\end{align*}
  As the sum over $e^{-\ell_{v+1}/2}$ converges,
\begin{align*}
\begin{split}
 \sum^{R}_{v=0}S_+ \ll \phis(q) e^{-\ell_{v+1}/2}(\log q)^{k^2}.
\end{split}
\end{align*}  
A similar estimate holds with $S_+$ replaced by $S_{-}$. This completes the proof of the proposition.

\vspace*{.5cm}

\noindent{\bf Acknowledgments.} P. G. is supported in part by NSFC grant 12471003 and L. Z. by the FRG Grant PS71536 at the University of New South Wales.
The authors are grateful to the referee for a careful reading of the paper and many helpful comments.

\bibliography{biblio}
\bibliographystyle{amsxport}

\vspace*{.5cm}

\end{document}